\documentclass[12pt]{amsart}
\usepackage[colorlinks=true,pagebackref,hyperindex,citecolor=Green,linkcolor=Blue]{hyperref}
\usepackage[top=1in, bottom=1in, left=1.1in, right=1.1in]{geometry}
\usepackage{amsmath}
\usepackage{amsfonts}
\usepackage{amssymb}
\usepackage{amsthm}
\usepackage[all]{xy}
\usepackage{mathrsfs}
\usepackage{enumitem} 
\usepackage[T1]{fontenc}
\usepackage{color}
\usepackage{stmaryrd}
\usepackage{bigints}

\usepackage{graphicx} 
\usepackage{tikz-cd} 
\usepackage{tikz} 

\makeatletter
\def\namedlabel#1#2{\begingroup
#2%
\def\@currentlabel{#2}%
\phantomsection\label{#1}\endgroup
}
\makeatother


\newtheorem{theorem}{Theorem}[section]

\newtheorem{corollary}[theorem]{Corollary}

\newtheorem{lemma}[theorem]{Lemma}

\newtheorem{proposition}[theorem]{Proposition}

\newtheorem{theoremx}{Theorem}

\newtheorem{propositionx}[theoremx]{Proposition}

\theoremstyle{definition} 
\newtheorem{definition}[theorem]{Definition}

\newtheorem{example}[theorem]{Example}
\newtheorem{remark}[theorem]{Remark}
\numberwithin{equation}{subsection}







\definecolor{blue-violet}{rgb}{0.54, 0.17, 0.89}
\definecolor{Blue}{rgb}{0.01, 0.28, 1.0}
\definecolor{gGreen}{rgb}{0.2, 0.8, 0.2}
\definecolor{Green}{rgb}{0.04, 0.85, 0.32}




\makeatletter
\def\@tocline#1#2#3#4#5#6#7{\relax
  \ifnum #1>\c@tocdepth 
  \else
    \par \addpenalty\@secpenalty\addvspace{#2}%
    \begingroup \hyphenpenalty\@M
    \@ifempty{#4}{%
      \@tempdima\csname r@tocindent\number#1\endcsname\relax
    }{%
      \@tempdima#4\relax
    }%
    \parindent\z@ \leftskip#3\relax \advance\leftskip\@tempdima\relax
    \rightskip\@pnumwidth plus4em \parfillskip-\@pnumwidth
    #5\leavevmode\hskip-\@tempdima
      \ifcase #1
       \or\or \hskip 1.9em \or \hskip 2em \else \hskip 3em \fi%
      #6\nobreak\relax
    \dotfill\hbox to\@pnumwidth{\@tocpagenum{#7}}\par
    \nobreak
    \endgroup
  \fi}
\makeatother



\newcommand{\NN}{\mathbb{N}}
\newcommand{\RR}{\mathbb{R}}
\newcommand{\ZZ}{\mathbb{Z}}
\newcommand{\QQ}{\mathbb{Q}}
\newcommand{\CC}{\mathbb{C}}
\newcommand{\KK}{\mathbb{K}}


\newcommand{\cM}{\mathscr{M}}
\newcommand{\cN}{\mathscr{N}}

\newcommand{\cO}{{\mathcal O}}

\newcommand{\cJ}{{\mathcal J}}

\newcommand{\cB}{\mathcal{B}}


\newcommand{\Hom}{\operatorname{Hom}}

\newcommand{\Frac}{\operatorname{Frac}}

\newcommand{\lct}{\operatorname{lct}}

\newcommand{\gr}{\operatorname{gr}}

\newcommand{\Der}{\operatorname{Der}}






\newcommand{\ord}{\operatorname{ord}}





\newcommand{\bfg}{{\bf \frac{f^s}{g^s}}}
\newcommand{\bfga}{{\bf \frac{f^s}{g^{s+\alpha}}}}
\newcommand{\bfgt}{{\bf \frac{\tilde{f}^s}{\tilde{g}^{s+\alpha}}}}

\newcommand{\ga}{{\bf \frac{1}{g^{\alpha}}}}


\begin{document}

\title[Bernstein-Sato polynomial for meromorphic functions]{Bernstein-Sato polynomial and related invariants for meromorphic functions}

\dedicatory{In memory of Professor Roberto Callejas-Bedregal}

\author[\`Alvarez Montaner]{Josep \`Alvarez Montaner{$^1$}}
\address{Departament de Matem\`atiques  and  Institut de Matem\`atiques de la UPC-BarcelonaTech (IMTech)\\  Universitat Polit\`ecnica de Catalunya \\ Av.~Diagonal 647, Barcelona 08028; and Centre de Recerca Matem\`atica (CRM), Bellaterra, Barcelona 08193.} 
\thanks{{$^1$}Partially supported by projects reference  PID2019-103849GB-I00 and PID2023-146936NB-I00 funded by the Spanish State Agency MICIU/AEI/10.13039/501100011033. He is also supported by and AGAUR grant 2021 SGR 00603 and  Spanish State Research Agency, through the Severo Ochoa and Mar\'ia de Maeztu Program for Centers and Units of Excellence in R$\&$D (project CEX2020-001084-M).}
\email{josep.alvarez@upc.edu}

\author[Gonz\'alez Villa]{Manuel Gonz\'alez Villa$^{2}$}
\address{
Centro de investigaci\'on en 
Matem\'aticas\\
Apartado Postal
402 \\
C.P. 36000, Guanajuato, GTO, M\'exico.
}
\thanks{$^{2}$Partially supported by CONACyT Grant No. 320393 and by Grant PID2020-114750GB-C33 funded by
 MCIN/AEI/10.13039/501100011033.}
\email{manuel.gonzalez@cimat.mx}

\author[Le\'on-Cardenal]{Edwin Le\'on-Cardenal$^{3}$}
\address{Departamento de Matem\'aticas, IUMA, Universidad de Zaragoza\\ 
	C. Pedro Cerbuna 12, 50009, Zaragoza, Spain.}
\email{eleon@unizar.es}
\address{CONACYT--Centro de investigaci\'on en 
	Matem\'aticas\\
	Unidad Zacatecas, Av. Lasec Andador Galileo Galilei, Manzana 3 Lote 7\\ 
	C.P.  98160,
	Zacatecas, ZAC, M\'exico.
}
\email{edwin.leon@cimat.mx}
\thanks{$^{3}$ Partially supported by CONACyT Grant No. 286445, Grant PID2020-114750GB-C31 funded by MCIN/AEI/10.13039/501100011033, and the
	Departamento de Ciencia, Universidad y Sociedad del Conocimiento of the
	Gobierno de Arag\'on (Grupo de referencia “\'Algebra y Geometr\'ia”) $E22\_20R$. 
	Funded by European Union-Next Generation EU under Mar\'ia Zambrano program.}

\author[N\'u\~nez-Betancourt]{Luis N\'u\~nez-Betancourt$^{4}$}
\address{
Centro de investigaci\'on en 
Matem\'aticas\\
Apartado Postal
402 \\
C.P. 36000, Guanajuato, GTO, M\'exico.
}
\thanks{$^{4}$ Partially supported by  CONACyT Grant 284598 and C\'atedras Marcos Moshinsky.}
\email{luisnub@cimat.mx}

\subjclass[2020]{Primary 14F10; Secondary 46F10, 32S45, 14E15, 14F18, 13N10.}
\keywords{Bernstein-Sato polynomial, Archimedean Local Zeta Functions, Multiplier Ideals}



\begin{abstract}
We develop a theory of Bernstein-Sato polynomials for meromorphic functions. As a first application we study the poles of Archimedian local zeta functions for meromorphic germs. We also present a  theory of multiplier ideals for meromorphic functions from the analytic and algebraic point of view. It is also shown that the jumping numbers of these multiplier ideals are related with the roots of the Bernstein-Sato polynomials.
\end{abstract}

\maketitle

\setcounter{tocdepth}{1}
\tableofcontents

\section{Introduction} 

The theory of Bernstein-Sato polynomials was introduced independently by Bernstein \cite{Ber72} to study the analytic continuation of the Archimedean zeta function and Sato \cite{SatoPoly} in the context of prehomogeneous vector spaces.  Rapidly, Bernstein-Sato polynomials became an indispensable tool to study invariants of singularities of holomorphic or regular functions. In particular, the roots of the Bernstein-Sato polynomials are related to many other invariants such as poles of zeta functions, eigenvalues of the monodromy of the Milnor fiber, jumping numbers of multiplier ideals, spectral numbers and F-thresholds, among others.

The aim of this paper is to develop a theory of Bernstein-Sato polynomials for the case of meromorphic functions, and to relate it to other invariants of such functions.   
This is inspired in previous work that has been done for singularities of meromorphic functions.
For instance, Arnold proposed the study of singularities of meromorphic functions and began its classification 
\cite{MR1683686}. He remarked that even simple examples provide new interesting phenomena. Gussein-Zade, Luengo, and Melle \cite{MR1647824, MR1725939, MR1734347, MR1849313} pursued  a systematic research of the topology and monodromy of germs of complex meromorphic functions (see also  works  by Tibar and Siersma 
\cite{MR1900789, MR2087815}, Bodin, Pichon, and  Seade  \cite{MR2318645, MR2430986,MR2545254}). Raibaut  \cite{MR2929060,MR3068616} studied motivic zeta functions and motivic Milnor fibers for meromorphic germs, also considered by Libgober, Maxim, and the second author  \cite{GVLMad, GVLMcm}. Z\'u\~niga-Galindo and the third author treated $p$-adic zeta functions for Laurent polynomials \cite{LeZu}. Lemahieu and the second author treated the case of topological zeta functions of meromorphic germs, and proved a generalization of the monodromy conjecture in the two variable case \cite{GVLLem}.   Veys and Z\'u\~niga-Galindo  \cite{Ve-Zu} investigated meromorphic continuation and poles of zeta functions and oscillatory integrals for meromorphic functions defined over local fields of characteristic zero (see also \cite{Leon,MR3779568, bocardogaspar2020poles}).
Nguyen and Takeuchi  \cite{nguyen2019meromorphic} introduced meromorphic nearby cycle functors, and studied some of its applications to the monodromy of meromorphic germs. 

In this paper, we define several types of Bernstein-Sato polynomials for meromorphic functions.  The more general version is a family of Bernstein-Sato polynomials $b_{f/g}^\alpha(s)$ indexed by $\alpha\in \mathbb{R}_{\geq 0}$; however, it only exists for all but finitely many values of $\alpha$ (see Definition \ref{Def:Module_M_alpha} for the module where the equation occurs). The second version mimics the classical case and is used in Theorem \ref{Thm: JN_rootsBS} to study multiplier ideals. 

\begin{theoremx} [{Theorem \ref{Thm:GeneralizedFuncEq}}]\label{MainThmA}
Let $R$ be either a polynomial ring $R=\KK[x_1,\dots,x_n]$ or a ring of holomorphic functions $R=\CC\{x_1,\dots,x_n\}$. Let $f,g\in R$ be nonzero elements. Then: 
\begin{enumerate}
\item  For almost all $\alpha\in\RR_{\geq 0}$,  
there exists $b(s)\in \KK[s]\setminus \{0\}$ and $\delta(s)\in D_{R|\KK}[s]$ such that
$$
\delta(s)  f \bfga= b(s)\bfga.
$$
\item  There exists $b(s)\in \KK[s]\setminus \{0\}$ and $\delta(s)\in D_{R|\KK}[s]$ such that
$$\delta(s)\frac{f}{g} \bfg= b(s)\bfg.$$
\end{enumerate}

\end{theoremx}
We point out that our construction in Theorem \ref{MainThmA} uses critically the work of Sabbah \cite{Sabbah}. 
The parameter $\alpha$ in the first part of Theorem \ref{MainThmA} is needed for our applications to zeta functions (see  Theorem \ref{MainThmZeta}), which can be obtained even if $b_{f/g}^\alpha(s)=0$ for some $\alpha$. 

We also provide a version of Kashiwara's proof of the rationality of the roots of the Bernstein-Sato polynomial $b_{f/g}^\alpha(s)$ \cite{Kashiwara} and the refinement given by Lichtin \cite{Lic89}.

\begin{theoremx} [{Theorem  \ref{Thm:Kashiwara}, Corollary  \ref{Cor:Kashiwara}}]
Let $f,g\in \CC\{ x_1,\dots , x_n\}$ be  nonzero holomorphic functions and let $\alpha\in\RR_{\geq 0}$ such that $b_{f/g}^\alpha\neq 0$.
The roots of $b_{f/g}^\alpha(s)$ are negative rational numbers. Moreover, they are contained in the set
$$\Bigg\{ \frac{k_i+1+ \ell }{N_{f/g, i}}  \;  ; \;  \ell \geq 0, \;  i\in I_0\Bigg\},$$ where the integers $N_{f/g, i}$ and $k_i$ are extracted from the numerical data of a log resolution of $f/g$.
\end{theoremx}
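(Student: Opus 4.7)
The plan is to follow Kashiwara's resolution-of-singularities argument in the form refined by Lichtin, adapted to the meromorphic setting. First, fix a log resolution $\pi\colon Y \to X$ of the pair $(X, f\cdot g)$ so that $\pi^{*}f$, $\pi^{*}g$ and the relative canonical divisor $K_{Y/X}$ all have simple normal crossings support. Around a point of the exceptional locus one has local coordinates $y_{1},\ldots,y_{n}$ with $\pi^{*}f = u\prod_i y_i^{a_i}$, $\pi^{*}g = v\prod_i y_i^{b_i}$ and $\pi^{*}(dx_1\wedge\cdots\wedge dx_n) = w\prod_i y_i^{k_i}\,dy_1\wedge\cdots\wedge dy_n$, with $u,v,w$ units; in this notation $N_{f/g,i}=a_i-b_i$, and $I_{0}$ collects the indices with $N_{f/g,i}>0$.

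Next I would transfer the problem to $Y$ through the $D$-module direct image $\pi_{+}$. By Kashiwara's classical argument, the roots of $b_{f/g}^{\alpha}(s)$ appear among those of the local Bernstein--Sato polynomials attached to $\pi^{*}\bfga$ on $Y$, after an explicit shift determined by the relative canonical divisor (a twist by $\prod_i y_i^{k_i}$ coming from the direct-image functor). On a local chart of $Y$ one has $\pi^{*}\bfga=(u^{s}/v^{s+\alpha})\prod_i y_i^{N_{f/g,i}\,s - b_i\alpha}$, and the operator $\prod_i y_i^{m_i}\,\partial_{y_i}^{m_i}$ produces a functional equation whose polynomial factors, modulo a unit, as $\prod_i\prod_{j=0}^{m_i-1}\bigl(N_{f/g,i}\,s-b_i\alpha-j\bigr)$. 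Absorbing the Jacobian shift $\prod_i y_i^{k_i}$ converts the $i$-th factor into a product of linear terms with roots $-(k_i+1+\ell)/N_{f/g,i}$ for $\ell\ge 0$; only indices with $N_{f/g,i}>0$ contribute, since for $N_{f/g,i}\le 0$ one may take $m_i=0$ and the corresponding monomial already lives in the meromorphic ambient module.

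The hard part will be the transfer step itself: ensuring that Kashiwara's direct-image argument, which in the original proof depends on coherence and finiteness properties of the $D[s]$-modules involved, still goes through once one localises at $g$ and works with meromorphic sections of $\bfga$. In particular, one must verify that the unit factors $u^{s}/v^{s+\alpha}$ do not contribute spurious roots and that $\pi_{+}$ behaves well on the relevant quasi-coherent $D[s]$-modules attached to $\pi^{*}\bfga$. Once this is settled, the local monomial computation is essentially mechanical, and combining the two yields both the rationality of the roots and the explicit containment in $\bigl\{-(k_i+1+\ell)/N_{f/g,i}\, :\, \ell\ge 0,\ i\in I_0\bigr\}$.
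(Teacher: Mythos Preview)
Your plan is the same as the paper's: take a log resolution, compute the local Bernstein--Sato polynomials in the resulting monomial situation, push down via Kashiwara's direct-image argument, and use Lichtin's twist by the relative canonical divisor to produce the $k_i$. You are also right that the delicate point is the transfer step, and the paper handles this exactly as Kashiwara does, by showing that $\cN^\alpha_{\tilde f/\tilde g}$ is subholonomic, that $\cN=\mathbb R^0\pi_+\cN^\alpha_{\tilde f/\tilde g}$ and $\cN/t\cN$ are holonomic, and that there exist $\ell\ge 0$ with $t^\ell\cN\subseteq D[s]u$ for the canonical section $u$; this yields the divisibility $b^\alpha_{f/g}(s)\mid \prod_{j=0}^{\ell}b^\alpha_{\tilde f/\tilde g}(s+j)$.

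Two places where your sketch is loose compared with the paper. First, the operator $\prod_i y_i^{m_i}\partial_{y_i}^{m_i}$ acts on $\pi^*\bfga$ as a scalar, so it furnishes an annihilator relation rather than the functional equation $\delta(s)\,\tilde f\,\bfgt=b(s)\bfgt$; to get the latter you need operators that genuinely lower exponents (e.g.\ $\prod_i\partial_{y_i}^{a_i}$ applied to $\tilde f\,\bfgt$). Second, ``absorbing the Jacobian shift $\prod_i y_i^{k_i}$'' is not just a shift of exponents; taken literally your recipe would leave the term $b_i\alpha$ in the candidate roots, contradicting the $\alpha$-independent statement. In the paper this step is Lichtin's passage to right $D$-modules via $M\mapsto\omega\otimes M$, under which the Bernstein--Sato polynomial transforms by $s\mapsto -s-1$: upstairs one works with $v=\pi^*(dx_1\wedge\cdots\wedge dx_n)\otimes\bfgt$, so the Jacobian factor enters through $\pi^*(dx)$, one computes the local $b_v(s)=\prod_{i}\prod_{j}(s+(k_i+j)/N_{f/g,i})$ for this twisted generator, and then transports back through the involution and the divisibility $b_{u^*}(s)\mid\prod_{j=0}^{\ell}b_{\cN_v}(s-j)$. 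That right-module passage is the missing mechanism in your outline.
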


As an application, we study meromorphic continuation of  Archimedean local zeta functions of meromorphic germs, extending the ideas of Bernstein  \cite{Ber72}  in the classical case. Our approach is complementary to the one done by  Veys and  Z\'u\~niga-Galindo  \cite{Ve-Zu}, where they used resolution of singularities following the ideas of Bernstein and Gel'fand \cite{Ber-Gel} and Atiyah  \cite{Ati}.

 \begin{theoremx} [{Theorem  \ref{Thm:MerContZetaF}}]\label{MainThmZeta}
	Let $f$ and $g$ be nonzero holomorphic functions and let 
	\[\alpha=\max_{k\geq 1}\left\{\alpha^\prime=\frac{\lct_0 g}{k} \ ; \ b^{\alpha^\prime}_{f/g}(s)\neq 0\right\}\quad\text{and}\quad \beta=\max_{k\geq 1}\left\{-\beta^\prime=\frac{\lct_0 f}{k} \ ; \ b^{\beta^\prime}_{g/f}(s)\neq 0\right\}.
	\]
	The local zeta function $Z_{\phi}(s,f/g)$ has a meromorphic continuation to the whole complex plane $\CC$, and  its poles  are contained in the set 
	\[\Bigl\{\zeta-\ell\alpha \ ;\ \ell\in\ZZ_{\geq 0}\Bigr\}_{\zeta \text{ root of } b^\alpha_{f/g}(s)}\bigcup \Bigl\{\ell\beta-\xi \ ;\ \ell\in\ZZ_{\geq 0}\Bigr\}_{\xi \text{ root of } b^\beta_{g/f}(s)}.\]
	In particular, the poles of $Z_{\phi}(s,f/g)$ are rational numbers.
\end{theoremx}

We also develop a theory of multiplier ideals for meromorphic functions from the analytic and algebraic point of view.

\begin{theoremx} 
 Let $f$ and $g$ be nonzero holomorphic functions. Then:
\begin{itemize}

\item  {\rm (Theorem  \ref{Thm: ratJN})}
The set of jumping numbers of $f/g$  is a set of rational numbers with no accumulation points.

\item  {\rm (Theorem \ref{Thm: JN_rootsBS})}
Let $\lambda$ be a jumping number of $f/g$ such that $\lambda \in (1-\lct_0(g), 1]$. Then, $- \lambda$ is a root of the Bernstein-Sato polynomial of $f/g$.

\end{itemize}	

\end{theoremx}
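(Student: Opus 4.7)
The plan for the first part is to work with a log resolution $\pi: Y \to X$ of the pair $(f,g)$, writing $\pi^*f = \sum_i a_i^f E_i$, $\pi^*g = \sum_i a_i^g E_i$, $K_{Y/X} = \sum_i k_i E_i$, and $N_{f/g,i} := a_i^f - a_i^g$. The construction of $\mathcal{J}((f/g)^\lambda)$ in terms of the resolution, developed earlier in the multiplier ideals section, shows that $\mathcal{J}((f/g)^\lambda)$ only depends on $\lambda$ through the integer tuple $(\lfloor \lambda N_{f/g,i} \rfloor)_{i \in I_0}$, where $I_0$ is the index set picking out the divisors with $N_{f/g,i} > 0$ exactly as in Theorem~\ref{Thm:Kashiwara}. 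Consequently the jumping numbers lie in the candidate set $\{(k_i+1+\ell)/N_{f/g,i} : \ell \geq 0,\ i \in I_0\}$, which is a set of positive rationals. Since $I_0$ is finite and the $N_{f/g,i}$ are bounded, each bounded interval contains only finitely many candidates, yielding both rationality and the absence of accumulation points.

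For the second part, the strategy is to transpose the classical argument of Ein--Lazarsfeld--Smith--Varolin to the meromorphic setting by applying the functional equation from Theorem~\ref{Thm:Existence} together with its generalized version in Proposition~\ref{Lem:GeneralizedFuncEq}. Write the equation in the form $\delta(s)(f/g)^{s+1}/g^\alpha = b^\alpha_{f/g}(s)(f/g)^s/g^\alpha$ for an exponent $\alpha$ slightly below $\lct_0(g)$, and evaluate at $s = -\lambda$. If $b^\alpha_{f/g}(-\lambda) \neq 0$, one can invert the relation to express the distribution $(f/g)^{-\lambda}/g^\alpha$ as a differential operator applied to $(f/g)^{1-\lambda}/g^\alpha$. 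Combining this with the Skoda-type containment $(f/g)\cdot \mathcal{J}((f/g)^{\lambda-1}) \subseteq \mathcal{J}((f/g)^\lambda)$ and a distributional pairing argument, one forces $\mathcal{J}((f/g)^{\lambda-\varepsilon}) = \mathcal{J}((f/g)^\lambda)$ for all sufficiently small $\varepsilon > 0$, contradicting the assumption that $\lambda$ is a jumping number. Therefore $b^\alpha_{f/g}(-\lambda) = 0$, and comparison between the generalized polynomial $b^\alpha_{f/g}$ and the default Bernstein-Sato polynomial $b_{f/g}$ yields $b_{f/g}(-\lambda) = 0$.

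The main obstacle is calibrating the auxiliary exponent $\alpha$ so that both sides of the functional equation give honest, convergent distributions in the stated window for $\lambda$. The natural choice $\alpha$ just below $\lct_0(g)$ is the largest exponent making $|g|^{-2\alpha}$ locally integrable, and the assumption $\lambda > 1 - \lct_0(g)$, equivalently $1 - \lambda < \lct_0(g)$, is precisely what ensures that the shifted term $(f/g)^{1-\lambda}/g^\alpha$ carries genuine information about the jump of $\mathcal{J}((f/g)^\lambda)$ without being drowned by the pole of $g$. The delicate technical step will be showing that the $L^2$ characterization of jumping numbers (via integrability of weights of the shape $|h|^2|f|^{-2\lambda}|g|^{2\lambda-2\alpha}$) is compatible with the purely algebraic action of the Bernstein-Sato operator; in particular one must check that the two sides of the functional equation can be regularized simultaneously and that the differential operator $\delta(-\lambda)$ maps the relevant multiplier ideal into itself, so that the contradiction described above can indeed be drawn.
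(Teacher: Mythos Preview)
Your argument for the first part is correct and is essentially the direct log-resolution argument that the paper carries out in Section~\ref{Sec6}. The paper's proof of Theorem~\ref{Thm: ratJN} in Section~\ref{Sec5} takes a slightly different route, using Remark~\ref{RemMixedMultMeromorphic} to identify $\cJ((f/g)^\lambda)$ with a colon ideal of a mixed multiplier ideal $\cJ(f^{\lambda}g^{t-\lambda})$ and then reading off the jumping numbers from the intersection of the jumping walls of $\cJ(f^{\lambda_1}g^{\lambda_2})$ with the line $\lambda_1+\lambda_2=t$. Your approach is more self-contained; the paper's has the advantage of recycling known facts about mixed multiplier ideals.

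For the second part there is a genuine gap. You conclude $b^\alpha_{f/g}(-\lambda)=0$ and then assert that ``comparison'' with $b_{f/g}$ gives $b_{f/g}(-\lambda)=0$. But the only divisibility established in the paper goes the other way: from the proof of Theorem~\ref{Thm:Existence} (and Remark~\ref{takeuchi}) one has $b_{f/g}(s)\mid b^{1}_{f/g}(s)$, so roots of $b^\alpha_{f/g}$ need not be roots of $b_{f/g}$. The detour through $b^\alpha_{f/g}$ is both unnecessary and fatal to the argument as written. The paper instead works directly with the functional equation of Theorem~\ref{Thm:Existence}, namely $\delta(s)\overline{\delta}(s)\,|f/g|^{2(s+1)}=b_{f/g}(s)^2\,|f/g|^{2s}$, with no auxiliary $\alpha$ at all: the hypothesis $\lambda>1-\lct_0(g)$ is exactly $1-\lambda<\lct_0(g)$, which already makes $|g|^{-2(1-c)}$ locally integrable for $c$ near $\lambda$, so the right-hand side after integration by parts is uniformly bounded.

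A second issue is your proposal to ``evaluate at $s=-\lambda$'' and invert. At $s=-\lambda$ the weight $|f/g|^{-2\lambda}$ fails to be locally integrable against $|h|^2$ (that is precisely what it means for $\lambda$ to be a jumping number for $h$), so the equation does not make sense pointwise there. The paper avoids this by taking $c\in[\lambda',\lambda)$, pairing with $|h|^2\phi$, and letting $c\to\lambda^-$: the integrated right-hand side stays bounded while $\int |h|^2|g|^{2c}/|f|^{2c}\to\infty$, forcing $b_{f/g}(-c)^2\to 0$ and hence $b_{f/g}(-\lambda)=0$. Your sketch of a Skoda-type containment and the claim that $\delta(-\lambda)$ preserves a multiplier ideal are not needed, and it is unclear how they would be made rigorous here.
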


There is also a weaker version of Skoda's Theorem in this context.

\begin{propositionx} 
 Let $f$ and $g$ be nonzero holomorphic functions. Then:
	
\begin{itemize}
\item {\rm (Proposition  \ref{PropSkoda})} $
\cJ ( ( \frac{f}{g})^{\lambda+\ell})
=
( \frac{f^\ell}{g^\ell}( \cJ  ( \frac{f}{g})^\lambda))\bigcap R
$
for every $\ell\in\NN.$
In particular,
if  $\lambda+1$ is a jumping number, then $\lambda$ is a jumping number.

\item {\rm (Lemma \ref{Lem: JN_integers})}  For every $\lambda \in \RR_{> 0}$, we have  $\cJ(f^\lambda) \subseteq  \cJ((\frac{f}{g})^\lambda)$. In addition, we have $\cJ((\frac{f}{g})^n)=(f^n)$  for every  $n \in \mathbb{Z}_{>0}$.
\end{itemize}	

\end{propositionx}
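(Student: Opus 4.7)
The proof rests on the integrability characterization of the meromorphic multiplier ideal: a germ $h \in R$ belongs to $\cJ((f/g)^\lambda)$ if and only if the function $|h|^2 \cdot |g|^{2\lambda}/|f|^{2\lambda}$ is locally integrable near the origin; equivalently, on a log resolution $\pi\colon Y \to X$ of $fg$, the numerical condition
\[
\ord_E(h) + \ord_E(K_{Y/X}) \ge \lfloor \lambda(\ord_E(f) - \ord_E(g)) \rfloor
\]
holds for every prime divisor $E$ on $Y$. The fractional ideal $(f^\ell/g^\ell)\cJ((f/g)^\lambda)$ is understood inside the fraction field of $R$, and the intersection with $R$ picks out the genuinely regular elements.

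For the Skoda-type identity in Proposition \ref{PropSkoda}, the key calculation is the pointwise identity
\[
|h|^2 \frac{|g|^{2(\lambda+\ell)}}{|f|^{2(\lambda+\ell)}} = \left|\frac{h g^\ell}{f^\ell}\right|^2 \frac{|g|^{2\lambda}}{|f|^{2\lambda}}.
\]
Reading this in both directions, $h \in \cJ((f/g)^{\lambda+\ell})$ if and only if the meromorphic expression $h' := h g^\ell/f^\ell$ satisfies the integrability condition at exponent $\lambda$, meaning $h'$ represents an element of $\cJ((f/g)^\lambda)$ as a fractional ideal, together with the regularity requirement $h \in R$. This yields the claimed equality. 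The consequence for jumping numbers is a contrapositive: if $\cJ((f/g)^{\lambda - \epsilon}) = \cJ((f/g)^{\lambda})$ for some $\epsilon > 0$, multiplying by $f/g$ and intersecting with $R$ via the Skoda formula forces $\cJ((f/g)^{\lambda + 1 - \epsilon}) = \cJ((f/g)^{\lambda + 1})$, contradicting the assumption that $\lambda + 1$ is a jumping number.

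For Lemma \ref{Lem: JN_integers}, the inclusion $\cJ(f^\lambda) \subseteq \cJ((f/g)^\lambda)$ is immediate from the integrability description: since $g$ is holomorphic, $|g|^{2\lambda}$ is locally bounded above by some constant $M$, so local integrability of $|h|^2/|f|^{2\lambda}$ forces local integrability of $|h|^2 |g|^{2\lambda}/|f|^{2\lambda} \le M |h|^2/|f|^{2\lambda}$. For the formula $\cJ((f/g)^n) = (f^n)$ with $n \in \ZZ_{>0}$, I would apply the Skoda formula with $\lambda = 0$ and $\ell = n$, using $\cJ(1) = R$, to obtain $\cJ((f/g)^n) = ((f^n/g^n)R) \cap R$. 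The inclusion $(f^n) \subseteq \cJ((f/g)^n)$ is clear, since $h = f^n s$ can be rewritten as $(f^n/g^n)(s g^n)$ with $s g^n \in R$. The reverse inclusion reduces to showing that $h g^n \in (f^n)$ implies $h \in (f^n)$, which follows by passing to the representation of $f/g$ in lowest terms so that $\gcd(f^n, g^n) = 1$, or equivalently by observing that the associated primes of $f^n R$ avoid $g$ in this case.

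The main obstacle is this final coprimality step: the Skoda identity on its own only cuts the fractional ideal down to $(f^n/g^n)R \cap R = (f^n) : g^n$, and the equality with $(f^n)$ requires the pair $(f,g)$ to be set up so that $g$ is a nonzerodivisor modulo $(f^n)$. The rest of the argument is essentially mechanical once the integrability (or log-resolution) definition of $\cJ((f/g)^\lambda)$ is in hand, and one additionally verifies the routine bookkeeping that the meromorphic expression $h g^\ell/f^\ell$ really represents a section of the fractional ideal $\cJ((f/g)^\lambda)$ on a log resolution.
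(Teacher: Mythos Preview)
Your argument is correct, and it takes a genuinely different route from the paper in both parts.

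For Proposition~\ref{PropSkoda}, the paper does not work directly with the integrability definition. Instead it invokes Remark~\ref{RemMixedMultMeromorphic}, which rewrites $\cJ((f/g)^\mu)$ as a colon ideal $\cJ(f^{\mu}g^{t-\mu}):g^t$ of an ordinary mixed multiplier ideal, and then appeals to the classical Skoda theorem for $\cJ(f^{\lambda+\ell}g^{t-\lambda-\ell})$. Your pointwise identity $|h|^2|g/f|^{2(\lambda+\ell)}=|hg^\ell/f^\ell|^2|g/f|^{2\lambda}$ bypasses that reduction entirely. The one step you should make explicit rather than call ``routine bookkeeping'' is that, in the $(\subseteq)$ direction, the meromorphic $h'=hg^\ell/f^\ell$ is actually holomorphic: after reducing $f/g$ to lowest terms, any pole of $h'$ lies on a component of $\{f=0\}$ not contained in $\{g=0\}$, and there the integrand $|h'|^2|g|^{2\lambda}/|f|^{2\lambda}$ dominates $|h'|^2$ times a positive constant, which is not locally integrable. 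Also note that the paper's definition of jumping number is two-sided, so your contrapositive needs the $\lambda+\varepsilon$ side as well; the same Skoda identity handles it.

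For Lemma~\ref{Lem: JN_integers}, the paper argues entirely on a log resolution: the inclusion comes from $N_{f,i}\ge N_{f/g,i}$, and the equality $\cJ((f/g)^n)=(f^n)$ from reading off orders along the strict-transform components of $f$ (where $k_i=0$ and $N_{f,i}=N_{f/g,i}$). Your proof of the inclusion via the trivial bound $|g|^{2\lambda}\le M$ is more elementary, and deriving $\cJ((f/g)^n)=(f^n/g^n)R\cap R$ from the Skoda formula at $\lambda=0$ is a nice shortcut. You are also right that the final identification $(f^n):g^n=(f^n)$ needs $f,g$ to be coprime; the paper's log-resolution argument has the same hidden hypothesis (it uses $N_{g,i}=0$ on the strict transform of $f$), so your explicit reduction to lowest terms is an improvement in clarity.
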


Finally we mention that Takeuchi has developed independently a theory of Bernstein-Sato polynomials for meromorphic functions \cite{takeuchi2023}. Both approaches differ slightly but complement each other. Takeuchi is interested in the relation between the roots of the Bernstein-Sato polynomial and the eigenvalues of the Milnor monodromies. Meanwhile, we pay attention to the meromorphic continuation of the Archimedean local zeta function and multiplier ideals. We also give a new proof for the existence of Takeuchi's Bernstein-Sato polynomial in Theorem \ref{Thm:Takeuchi}, which is based on holonomicity and not on $V$-filtrations.


\section{Log-resolution of meromorphic functions} \label{Sec:log_resolution}
Let  $f,g:(\CC^{n},0) \longrightarrow (\CC,0)$ be two germs of holomorphic functions and we refer to $f/g: (\CC^{n},0) \longrightarrow (\CC,0)$ as a germ of a meromorphic function. Taking local coordinates we assume $f,g \in R= \CC\{ x_1,\dots , x_n\}$. Recall that $f/g$ and $f'/g'$ define the same germ if there exist a unit $u \in \mathcal{O}_{\CC^{n},0}$ such that  $f= u f'$ and $g=u g'$.  

\begin{definition}
Let $X$ be a $n$-dimensional smooth analytic manifold, $U$ a neighborhood of $0 \in \mathbb{C}^{n}$ and $\pi : X \to U$ a proper analytic map. We say that $\pi$ is  log resolution of the meromorphic germ   $f / g$ if:
\begin{itemize}
\item $\pi$  is a log resolution of the hypersurface $H = \{f_{|_U} = 0\} \cup \{g_{|_U} = 0\}$, i.e. $\pi$ is an isomorphism outside a proper analytic subspace in $U$;

\item there is a 
normal crossing divisor $F$ on $X$ such that $\pi^{-1}(H) = \mathcal{O}_X(-F)$;

\item the lifting $\tilde{f} / \tilde{g} = (f/g)  \circ \pi = \frac{f \circ \pi}{g \circ \pi}$ defines a holomorphic map $\tilde{f} / \tilde{g} : X \to \mathbb{P}^1$.

\end{itemize}

\end{definition}

One can obtain a log resolution $\pi$ of $f/g$   from a log resolution
$\pi'$  of  $f \cdot g$ by blowing up along the intersections of irreducible components of
$\pi'^{-1}\{f \cdot g = 0\}$ until the irreducible components of the strict transform in $\pi^{-1}\{f = 0\}$
and $\pi^{-1}\{g = 0\}$ are separated by a dicritical component, i.e. an exceptional divisor $E$
of $\pi$ for which  $(\tilde{f}/ \tilde{g})_{|_E} : E \to \mathbb{P}^1$ is a surjective map.

Let  $\{E_i\}_{i \in I}$ be the irreducible components of $F$. We denote the relative canonical divisor, defined by the Jacobian  determinant of $\pi$, as the divisor 
$$K_\pi = \sum k_i E_i .$$
We also denote the total transforms of $f$ and $g$  as
\[ \tilde{f}=\pi^* f= \sum N_{f, i} E_i, \quad \tilde{g}= \pi^* g= \sum N_{g, i} E_i. \]
Moreover, we  define $$  N_{f/g,i}=N_{f,i}-N_{g,i}.$$
Notice that $E_i$ is a dicritical component if and only if $N_{f/g,i}=0$. 
It is then natural to  associate  with the meromorphic germ $f/g$ the  divisor $$\tilde{F} = \sum_{i \in I} N_{f/g,i} E_i.$$  For $i \in I$ we write 
$i \in I_0$ if $N_{f,i} > N_{g,i}$, $i\in I_\infty$ if $N_{g,i} > N_{f,i}$, and $i \in I_d$ if $N_{f_i}=N_{g,i}$. 
We have the decomposition $\tilde{F} = \tilde{F}_0 + \tilde{F}_\infty+ \tilde{F}_d$ where  $$\tilde{F}_0=\sum_{i \in I_0} N_{f/g,i}E_i,  \hskip 5mm \tilde{F}_\infty=\sum_{i \in I_\infty} N_{f/g,i} E_i, \hskip 5mm \textrm{and} \hskip 5mm \tilde{F}_d=\sum_{i \in I_d} N_{f/g,i} E_i,$$ with $\tilde{F}_d=0$ by definition.

\begin{remark}
Around a given point $p \in X$ we may consider a local system of coordinates $(z_1,\dots, z_{n})$ such that the components $E_i$ containing $p$ are given by the equations $z_i=0$. Assume that $p\in E_i$, for $i=1,\dots, m$, then we have the local equations:
\[ \tilde{f} = u z_1^{N_{f,1}} \cdots z_m^{N_{f,m}},\quad\text{and}\quad \tilde{g}= v z_1^{N_{g,1}} \cdots z_m^{N_g,m},\]
where $u,v \in \cO_{X,p}$ are units. Moreover, the local equations of the relative canonical divisor are
\[\tilde{\omega}=\pi^\ast(dx_1 \wedge \cdots\wedge dx_{n})= w (z_1^{k_1} \cdots  z_m^{k_m}) \, dz_1 \wedge \cdots \wedge dz_{n},\] 
where $w \in \cO_{X,p}$ is a unit as well. Furthermore, applying some extra blow-ups if necessary, we may assume that either $\tilde{f}$ divides $\tilde{g}$ or the other way around, so we have either
$$\frac{\tilde{f}}{\tilde{g}} = \frac{u}{v} \;   z_1^{N_{f/g,1}} \cdots z_m^{N_{f/g,m}} \hskip 5mm \textrm{or} \hskip 5mm  \frac{\tilde{f}}{\tilde{g}} = \frac{u}{v}  \;  \frac{1}{z_1^{|N_{f/g,1}|} \cdots z_m^{|N_{f/g,m}|}} .$$
\end{remark}

\section{Meromorphic Bernstein-Sato polynomial}\label{Sec3}

In this section we extend the classical Bernstein-Sato theory to the case of meromorphic functions.  Indeed we treat different types of Bernstein-Sato polynomials with a view towards relations with other invariants of singularities. First we recall the basics on the theory of rings of differential operators as introduced  by Grothendieck \cite[\S 16.8]{EGA}.
Next, we introduce a  Bernstein-Sato polynomial of a meromorphic function that mimics the classical case. This construction is used to study the jumping numbers of the meromorphic multiplier ideals we introduce in Section \ref{Sec5}. Then we provide another Bernstein-Sato polynomial of a meromorphic function that depends on a parameter $\alpha \in \RR_{\geq 0}$. This construction is used to study the analytic continuation of the Archimedian local zeta function of a meromorphic function in Section \ref{Sec4}. We also recover, for the case of polynomial rings, the notion of Bernstein-Sato polynomials for meromorphic functions introduced by Takeuchi \cite{takeuchi2023}.

\subsection{Rings of differential operators and its modules}
\begin{definition}
	Let $R$ be a Noetherian ring containing a field of characteristic zero $\KK$. The ring of $\KK$-linear differential
	operators of $R$ is the subring $D_{R|\KK}\subseteq \Hom_{\KK}(R,R)$,
	whose elements are defined inductively as follows.
	\begin{itemize}
		\item   Differential
		operators of order zero are defined by the multiplication by
		elements of $R$, and so, $D^{0}_{R|\KK}\cong R.$ 
		\item An element 
		$\delta\in \Hom_\ZZ(R,R)$ is an operator of order less than or equal
		to $n$ if $[\delta,r]=\delta r-r\delta$ is an operator of order less
		than or equal to $n-1.$ 
	\end{itemize}
	We denote by $D^n_{R|\KK}$ the set of all differential operators of order  less than or equal
	to $n$.	We  have a filtration, $D^{\bullet}_{R|\KK}$, given by $D^{0}_{R|\KK} \subseteq
	D^{1}_{R|\KK} \subseteq \cdots$ such that $D^m_{R|\KK}D^n_{R|\KK}\subseteq D^{m+n}_{R|\KK}$. 
	The ring of	differential operators is defined as
	\[D_{R|\KK}=\bigcup_{n\in\NN}D^{n}_{R|\KK}.
	\]
\end{definition}

Now we collect some properties of the class of holonomic $D$-modules in the particular case of polynomial rings. The interested reader may consult \cite{Cou} and \cite{Castro} for further insight.

\begin{definition}
	Let $R=\KK[x_1,\ldots, x_d]$.
	We define the Bernstein filtration of $R$, $\cB^\bullet_{R|\KK}$ by
	$$
	\cB^i_{R|\KK}=\KK\{x^\alpha \delta^\beta \; ;\; |\alpha|+|\beta|\leq i\}.
	$$
	We have 
	\begin{itemize}
		\item $\dim_\KK  \cB^i_{R|\KK}=\binom{n+i}{i}<\infty$,
		\item $D_{R|\KK}=\bigcup_{i\in\NN} \cB^i_{R|\KK}$, and 
		\item $ \cB^n_{R|\KK} \cB^j_{R|\KK} =\cB^j_{R|\KK}$.
	\end{itemize}
\end{definition}
and $\gr_{\cB^\bullet_{R|\KK}}(D_{R|\KK})$ is a commutative ring isomorphic to 
$\KK[x_1,\ldots, x_d,y_1,\ldots,y_d]$.

\begin{definition}
	Given a left (right) $D_{R|\KK}$-module, $M$, we say that a filtration $\Gamma^\bullet$ of $\KK$-vector spaces is $\cB^\bullet_{R|\KK}$-compatible if
	\begin{itemize}
		\item $\dim_\KK  \Gamma^i<\infty$,
		\item $M=\bigcup_{i\in\NN} \Gamma^i$,
		\item $ \cB^i_{R|\KK} \Gamma^j\subseteq \Gamma^j$ ($ \Gamma^j \cB^i_{R|\KK}\subseteq \Gamma^j$).
	\end{itemize}
	We say that $\Gamma^\bullet$ is a good filtration if 
	$\gr_{\Gamma^\bullet}(M)$ is finitely generated as a  $\gr_{\cB^\bullet_{R|\KK}}(D_{R|\KK})$-module. 
\end{definition}

If $\Gamma^\bullet$ is a good filtration for $M$, we have that
$\dim_\KK \Gamma^n$ is a polynomial function on $n$ of degree equal to the Krull dimension
of $\gr_{\Gamma^\bullet}(M)$ by Hilbert-Samuel theory. 
This degree does not depend of the choice of such good filtration;  it is called the dimension of $M$ and denoted $\dim_{D_{R|\KK}} (M)$. 
We note that $M$ admits a good filtration if and only if it is a finitely generated  $D_{R|\KK}$-module \cite[Theorem 2.3]{Cou}.

\begin{theorem}[Bernstein's  Inequality]
	Let $M$ be a finitely generated  $D_{R|\KK}$-module.
	Then,
	$$
	d\leq  \dim_{D_{R|\KK}} (M)\leq 2d.
	$$
\end{theorem}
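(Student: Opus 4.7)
My plan is to prove the two inequalities separately, under the (implicit) assumption that $M \neq 0$; the upper bound is essentially formal and the lower bound is the substantive content. For the upper bound $\dim_{D_{R|\CC}}(M) \leq 2d$, I would simply invoke the definition: $\dim_{D_{R|\CC}}(M)$ equals the Krull dimension of the finitely generated $\gr_{\cB^\bullet_{R|\CC}}(D_{R|\CC})$-module $\gr_{\Gamma^\bullet}(M)$ for any choice of good filtration, and since $\gr_{\cB^\bullet_{R|\CC}}(D_{R|\CC}) \cong \CC[x_1, \ldots, x_d, y_1, \ldots, y_d]$ has Krull dimension $2d$, no module over it can have higher dimension.

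For the lower bound $d \leq \dim_{D_{R|\CC}}(M)$, I would fix a good filtration $\Gamma^\bullet$ and, after shifting indices, arrange that $\Gamma^0 \neq 0$. The heart of the argument is the injectivity, for every $i \geq 0$, of the canonical $\CC$-linear map
$$\phi_i \colon \cB^i_{R|\CC} \longrightarrow \Hom_\CC\bigl(\Gamma^i, \Gamma^{2i}\bigr), \qquad P \longmapsto \bigl(m \mapsto Pm\bigr).$$
I would prove this by induction on $i$. The base case $i=0$ is trivial, since $\cB^0_{R|\CC} = \CC$ and $\Gamma^0 \neq 0$. For the inductive step, suppose $P \in \cB^i_{R|\CC}$ annihilates $\Gamma^i$. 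For each generator $x_j$ or $\partial_j$ of $\cB^1_{R|\CC}$, the commutator $[x_j, P]$ (resp.\ $[\partial_j, P]$) lies in $\cB^{i-1}_{R|\CC}$, because the associated graded of the Bernstein filtration is commutative. A direct computation using $x_j \Gamma^{i-1} \subseteq \Gamma^i$ and $P \Gamma^i = 0$ shows that these commutators annihilate $\Gamma^{i-1}$. By the inductive hypothesis they vanish in $D_{R|\CC}$, so $P$ commutes with every $x_j$ and every $\partial_j$, placing it in the center of $D_{R|\CC}$, which is $\CC$; combined with $P \cdot \Gamma^0 = 0$ and $\Gamma^0 \neq 0$, this forces $P = 0$.

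With injectivity in hand, the rest is a comparison of polynomial growth. Injectivity of $\phi_i$ yields
$$\binom{2d+i}{i} \;=\; \dim_\CC \cB^i_{R|\CC} \;\leq\; \dim_\CC \Gamma^i \cdot \dim_\CC \Gamma^{2i}.$$
The left-hand side is a polynomial in $i$ of degree $2d$. Because $\Gamma^\bullet$ is a good filtration, Hilbert--Samuel theory (invoked in the paragraph preceding the inequality) tells us that $\dim_\CC \Gamma^i$ agrees, for $i \gg 0$, with a polynomial of degree $\dim_{D_{R|\CC}}(M)$, so the right-hand side is eventually a polynomial in $i$ of degree $2\dim_{D_{R|\CC}}(M)$. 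Comparing leading degrees gives $2d \leq 2\dim_{D_{R|\CC}}(M)$, which is the desired bound.

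The main obstacle I anticipate is the injectivity lemma for $\phi_i$: the inductive step hinges on two delicate structural facts about the Weyl algebra that deserve careful verification, namely that commutators with the generators $x_j, \partial_j$ strictly lower the Bernstein order (equivalently, commutativity of the associated graded ring), and that the center of $D_{R|\CC}$ reduces to $\CC$. Once those are cleanly established, the bookkeeping of orders of filtration and the final dimension count proceed without further surprises.
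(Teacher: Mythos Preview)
Your argument is correct and is the standard proof of Bernstein's inequality (the injectivity lemma you call $\phi_i$ is sometimes attributed to Joseph). Note, however, that the paper does not supply its own proof of this statement: it is listed in the background section as a classical result and is simply stated without proof, so there is no ``paper's proof'' to compare against. Your write-up would serve perfectly well as a self-contained justification; the two structural facts you flag as potential obstacles (that commutators with $x_j,\partial_j$ drop Bernstein order, and that the center of the Weyl algebra is $\CC$) are indeed the only nontrivial inputs, and both are routine once stated.
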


\begin{definition}
	A  $D_{R|\KK}$-module $M$, is holonomic if either $M=0$ or  $\dim_{D_{R|\KK}} (M)=d$. 
\end{definition}
We observe that a  $D_{R|\KK}$-module $M$, with a good filtration $\Gamma^\bullet$ is holonomic
if and only if there exists a polynomial $q$ in one variable of degree $d$, such that $\dim_\KK \Gamma^n\leq q(n)$.
We recall that every holonomic $D_{R|\KK}$-module has finite length. 

\subsection{A $D_{R|\KK}[s]$-module for meromorphic functions}
All the instances of the Bernstein-Sato polynomials for meromorphic functions take place in the following module.

\begin{definition}\label{Def:Module_M_alpha}
Let $R$ be either a polynomial ring $R=\KK[x_1,\dots,x_n]$ or a ring of holomorphic functions $R=\CC\{x_1,\dots,x_n\}$.
Let $f,g\in R$ be nonzero elements, take $\alpha\in\RR_{\geq 0}$ and
set $\cM^\alpha_{f/g}[s]= R_{fg}[s] \bfga$. This free $R_{fg}[s]$-module has an structure of $D_{R|\KK}[s]$-module given by
\[
\partial \cdot \frac{h(s)}{f^a g^b}\bfga =   
\partial \left( \frac{h(s)}{f^a g^b} \right)\bfga 
+ s\frac{\partial(f) h(s)}{f^{a+1}g^b}\bfga
- (s+\alpha)\frac{h(s) \partial(g)}{f^a g^{b+1}} \bfga,
\]
where $\partial$ is a $\KK$-linear derivative on $R$.
We denote  $D_{R(s)| \KK(s)}$ by $D(s)$ and $\cM^\alpha_{f/g}[s]\otimes_{\KK[s]}\KK(s)$ by $\cM^\alpha_{f/g}(s)$. Here $\KK(s)$ denotes the fraction field of $\KK[s]$. If $\alpha=0$, we just write $\cM_{f/g}[s]= R_{fg} [s]\bfg$ and  $\cM_{f/g}(s)= R_{fg} (s)\bfg$.
\end{definition}

\begin{lemma}\label{Lemma:Holonomicity}
	Let $R=\KK[x_1,\ldots,x_d]$ be a polynomial ring and let $f,g\in R$ be two nonzero elements. Then, $\cM^\alpha_{f/g}(s)$ is a holonomic $D(s)$-module.	
\end{lemma}
	\begin{proof}
		We fix $\alpha\in\RR_{\geq 0}$ and let $\theta=\deg(f)+\deg(g)$. 
		We set a filtration of finite dimensional $\CC$-vector spaces.
		\[\Gamma^n=\frac{1}{f^ng^n}\left.\left\{ h \bfga \, \right| \, \deg(h)\leq (\theta+1)n \right\}.
		\]
		Note that $x_i \Gamma^n\subseteq \Gamma^{n+1}$ and moreover $\partial_i \Gamma^n\subseteq \Gamma^{n+1}$, as we shall see next. Given $\frac{h}{f^ng^n} \bfga \in \Gamma^n$, we have that
		\begin{align*}
			\partial_i \frac{h}{f^ng^n} \bfga  &=
			\frac{f^ng^n \partial_i (h) - nhf^{n-1}g^{n-1} \partial_i(fg)}{f^{2n} g^{2n}}\bfga
			+
			\frac{shg\partial_i (f)-(s+\alpha)hf\partial_i(g)}{f^{n+1}g^{n+1}}  \bfga\\ 
			&=
			\frac{fg \partial_i (h) - nh \partial_i(fg)}{f^{n+1} g^{n+1}}\bfga
			+
			\frac{sh(g\partial_i(f)-f\partial_i(g))-\alpha h f \partial_i(g)}{f^{n+1}g^{n+1}}  \bfga.
		\end{align*}
		By considering the degrees of the polynomials appearing in the numerators, we have that  $\partial_i \Gamma^n\subseteq \Gamma_{n+1}$. Then, $\Gamma^\bullet$ is a filtration compatible  with the Bernstein filtration.
		We observe  that $\dim_{\CC(s)} \Gamma^n\leq \dim_\CC [R]_{\leq (\theta+1)n}$, where the latter is a polynomial on $n$ of degree $d$. Then, $M$ is a holonomic $D(s)$-module.
	\end{proof}
	
	The following property will be useful later on.
\begin{remark} \label{Rem:iso2}
	Let $R=\KK[x_1,\ldots,x_n]$ be a polynomial ring.
	Let $f,g\in R$ be nonzero elements and $h(s)\in R[s]$. We have the following isomorphism of $D_{R|\KK}[s]$-modules $\psi_{\alpha,\ell}:\cM^\alpha_{f/g}[s]\to \cM^\alpha_{f/g}[s]$ defined by 
	\[\frac{h(s)}{f^a g^b}\bfga \mapsto  
	\frac{h(s-\ell ) }{f^{a+ \ell } g^{b-\ell }}\bfga =  \frac{h(s-\ell ) }{f^{a } g^{b}}\frac{g^\ell}{f^\ell}\bfga.
	\]
\end{remark}
\subsection{Bernstein-Sato polynomials for meromorphic functions}

We use the theory of Bernstein-Sato ideals introduced by Sabbah \cite{Sabbah} to obtain functional equations in $\cM^\alpha_{f/g}[s]$. In the form that we use it in this work we have:

\begin{theorem}[{\cite{Sabbah}}]\label{Thm:Sabbah}
Let $R$ be either a polynomial ring $R=\KK[x_1,\dots,x_n]$ or a ring of holomorphic functions $R=\CC\{x_1,\dots,x_n\}$. Let $f,g\in R$ be nonzero elements. Then, there exists $b(s_1,s_2)\in \CC[s_1,s_2]\setminus \{0\}$ and $\delta(s_1,s_2)\in D_{R|\KK}[s_1,s_2]$ such that 
\begin{equation}\label{EqSabbah}
\delta(s_1,s_2)  f g \; {\bf f^{s_1}g^{s_2}}=b(s_1,s_2) {\bf f^{s_1}g^{s_2}}.
\end{equation}
\end{theorem}

\begin{remark}
The description of the polynomials $b(s_1,s_2)$  appearing in Equation \ref{EqSabbah} is not well understood despite some recent progress \cite{BVWZ,Maisonobe}.
\end{remark}

\begin{theorem}\label{Thm:GeneralizedFuncEq} 
Let $R$ be either a polynomial ring $R=\KK[x_1,\dots,x_n]$ or a ring of holomorphic functions $R=\CC\{x_1,\dots,x_n\}$. Let $f,g\in R$ be nonzero elements. Then, 
\begin{enumerate}
\item For almost all $\alpha\in\RR_{\geq 0}$,  
there exists $b(s)\in \KK[s]\setminus \{0\}$ and $\delta(s)\in D_{R|\KK}[s]$ such that
$$
\delta(s)  f \bfga= b(s)\bfga.
$$
\item There exists $b(s)\in \KK[s]\setminus \{0\}$ and $\delta(s)\in D_{R|\KK}[s]$ such that
$$
\delta(s)  \frac{f}{g} \bfg= b(s)\bfg.
$$
\end{enumerate}

\end{theorem}

\begin{proof}
Sabbah \cite{Sabbah} and Gyoja \cite{Gyoja} proved that $b(s_1,s_2)$, in Equation \ref{EqSabbah}, can be chosen with the form
\begin{equation*}
	b(s_1,s_2) =\prod_i (a_i s_1 + b_i s_2 + \gamma_i),
\end{equation*} 
where $a_i,b_i\in\mathbb{N}$, $\gcd(a_i,b_i)=1$ and $\gamma_i\in \QQ_{>0}$. 
After evaluating Equation \ref{EqSabbah}
at $s_1=s$ and $s_2=-s-\alpha$, we obtain the following functional equation in $\cM^\alpha_{f/g}[s]$,
\begin{equation*}
(\delta(s,-s-\alpha)g) f \; \bfga=b(s,-s-\alpha) \; \bfga.
\end{equation*}
We note that 
$$
b(s,-s-\alpha)=\prod_i ((a_i  - b_i)s + \gamma_i-\alpha),
$$
which is zero if and only if $a_i=b_i=1$ and $\gamma_i=\alpha$ for some $i$. If $\alpha\neq \gamma_i$ for every $i$, then $b(s,-s-\alpha)\neq 0$. 

On the other hand, after evaluating Equation \ref{EqSabbah}
at $s_1=s$ and $s_2=-s$, we obtain the following functional equation in $\cM_{f/g}[s]$,
\begin{equation*}
	(\delta(s,-s)  g^2 )\; \left( \frac{f}{g} \right) \; {\bf \frac{f^{s}}{g^{s}}}=b(s,-s) \;  {\bf \frac{f^{s}}{g^{s}}}.
\end{equation*}
In particular $b(s,-s)\neq 0$ and all its roots are rational numbers. 
\end{proof}

%
%

\begin{definition}\label{DefBSMero}
	Let $R$ be either a polynomial ring $R=\KK[x_1,\dots,x_n]$ or a ring of holomorphic functions $R=\CC\{x_1,\dots,x_n\}$. Let $f,g\in R$ be nonzero elements. 
The Bernstein-Sato polynomial  $b_{f/g}^\alpha(s) \in \KK[s]$ of order $\alpha$ of the meromorphic function $f/g$, if it exists, is the monic polynomial of smallest degree satisfying the first functional equation in Theorem \ref{Thm:GeneralizedFuncEq}. Analogously, 
the Bernstein-Sato polynomial  $b_{f/g} (s) \in \KK[s]$ of the meromorphic function $f/g$ is the monic polynomial of smallest degree satisfying the second functional equation in Theorem \ref{Thm:GeneralizedFuncEq}.
%
%
%
\end{definition}

%
%

We note that, because of the way we build $b(s)$ in Theorem \ref{Thm:GeneralizedFuncEq}, it is not clear that it has rational roots if $\alpha\not\in\mathbb{Q}$. We will address this issue in Section \ref{Rational}.

Regarding the polynomial $b(s_1,s_2)$ from \eqref{EqSabbah}, Cassou-Nogu\`es and Libgober \cite{CNL} (see also \cite{Alv22}) proved that it is divisible by a product of linear forms defining the jumping walls of the mixed multiplier ideals $\mathcal{J}(f^{\lambda_1}g^{\lambda_2})$ (see \cite{LazBook2}, \cite{AAD} for unexplained terminology). Work in  \cite[Example 5.8]{AADG2} show that there are  explicit examples where $b(s,-s-\alpha)=0$ with $\alpha={\rm lct}_0(f)={\rm lct}_0(g)$  being the log-canonical thresholds of $f$ and $g$. However we should point out that the conditions needed for this to happen are very restrictive.


As in the classical case, we have the following characterization:

%

\begin{proposition}
Let $R$ be either a polynomial ring $R=\KK[x_1,\dots,x_n]$ or a ring of holomorphic functions $R=\CC\{x_1,\dots,x_n\}$. Let $f,g\in R$ be nonzero elements and let $\alpha\in\RR_{\geq 0}$ such that $b_{f/g}^\alpha(s)$ exist. Then the following are equal:
\begin{enumerate}
		\item The Bernstein-Sato polynomial of $f/g$ of order $\alpha$;
		\item The minimal polynomial of the action of $s$ on $\displaystyle \frac{D_{R|\KK}[s] \bfga}{D_{R|\KK}[s] f \bfga}$;
		\item The monic element of smallest degree in $\KK[s] \cap (\mathrm{Ann}_{D_{R|\KK}[s]}(\bfga) + D_{R|\KK}[s] f)$.
	\end{enumerate}

\end{proposition}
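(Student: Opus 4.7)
The plan is to realize all three objects as the monic generator of a single ideal of $\KK[s]$. Write $D = D_{R|\KK}$ and set
$$I = \{b(s)\in \KK[s] \;:\; \exists\, \delta(s)\in D[s] \text{ with } \delta(s) f \bfga = b(s)\bfga\},$$
the collection of polynomials satisfying a functional equation of the type in Definition \ref{DefBSalpha}. First I would verify that $I$ is an ideal of $\KK[s]$: it is closed under sums by adding the corresponding operators, and closed under multiplication by any $q(s)\in \KK[s]$ since $\delta(s)f\bfga = b(s)\bfga$ forces $q(s)\delta(s) f\bfga = q(s)b(s)\bfga$. Proposition \ref{Lem:GeneralizedFuncEq} guarantees $I\neq 0$, hence $I$ has a unique monic generator, which by definition is $b^\alpha_{f/g}(s)$.

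For the equivalence (1) $\Leftrightarrow$ (3), notice that $b(s)\in I$ amounts to the existence of $\delta(s)\in D[s]$ with $(b(s)-\delta(s)f)\bfga = 0$, i.e.\ $b(s)-\delta(s)f \in \mathrm{Ann}_{D[s]}(\bfga)$, or equivalently $b(s)\in \mathrm{Ann}_{D[s]}(\bfga)+D[s]f$. Intersecting with $\KK[s]$ yields $I = \KK[s]\cap(\mathrm{Ann}_{D[s]}(\bfga)+D[s]f)$, so the monic elements of smallest degree coincide.

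For (1) $\Leftrightarrow$ (2), consider $N := D[s]\bfga / D[s] f\bfga$ as a left $D[s]$-module (both $D[s]\bfga$ and $D[s]f\bfga$ are cyclic $D[s]$-submodules of $\cM^\alpha_{f/g}[s]$, the latter being contained in the former since $f\in D[s]$). Since $s$ lies in the center of $D[s]$, multiplication by any $p(s)\in\KK[s]$ is a $D[s]$-linear endomorphism of $N$, and so the minimal polynomial of the action of $s$ on $N$ is precisely the monic generator of the ideal $J := \mathrm{Ann}_{\KK[s]}(N)$. Because $N$ is generated over $D[s]$ by the class of $\bfga$ and $p(s)$ commutes with $D[s]$, we have $p(s)N = 0$ if and only if $p(s)\bfga \in D[s] f\bfga$, which is exactly the functional equation defining $I$. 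Hence $J = I$, and the three monic generators agree.

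The proof amounts to unwinding definitions and I do not expect a substantial obstacle; the only points that deserve care are verifying that $I$ is genuinely an ideal of $\KK[s]$ rather than merely a set closed under some operations, and invoking the centrality of $s$ in $D[s]$, which is what licenses speaking of a minimal polynomial for the $s$-action on $N$ in characterization (2).
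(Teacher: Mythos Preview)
Your proof is correct and follows essentially the same approach as the paper. The paper's own argument is very terse: it declares (1)$\Leftrightarrow$(2) to follow ``from the definition'' and obtains (2)$\Leftrightarrow$(3) via the isomorphism
\[
\frac{D_{R|\KK}[s]\,\bfga}{D_{R|\KK}[s]\,f\,\bfga}\;\cong\;\frac{D_{R|\KK}[s]}{\mathrm{Ann}_{D_{R|\KK}[s]}(\bfga)+D_{R|\KK}[s]\,f},
\]
whereas you unwind both equivalences explicitly through the single ideal $I\subseteq\KK[s]$; the content is the same, and your write-up supplies the details (centrality of $s$, cyclicity of $N$, the ideal structure of $I$) that the paper leaves implicit.
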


\begin{proof}
	The equality between $(1)$ and $(2)$ follows from the definition.  The first functional equation of Theorem \ref{Thm:GeneralizedFuncEq} gives us
	$$\delta(s)f  - b(s) \in \mathrm{Ann}_{D_{R|\KK}[s]}\left(\bfga \right)$$ and thus $b(s) \in \mathrm{Ann}_{D_{R|\KK}[s]}(\bfga)+ D_{R|\KK}[s] f$ and $(3)$ is also equal. For the equality between $(2)$ and $(3)$, we observe that
	
	\begin{align*}
	 \frac{D_{R|\KK}[s] \bfga}{D_{R|\KK}[s] {f}  \bfga}  & \cong \mathrm{coker}\left( \frac{D_{R|\KK}[s]}{\mathrm{Ann}_{D_{R|\KK}[s]}({ \bfga})} \xrightarrow{\cdot {f} } \frac{D_{R|\KK}[s]}{\mathrm{Ann}_{D_{R|\KK}[s]}(\bfga)} \right) \\
	 & \cong \frac{D_{R|\KK}[s]}{\mathrm{Ann}_{D_{R|\KK}[s]}(\bfga) + D_{R|\KK}[s] {f} }. 
	\end{align*} 
	
%
	
\end{proof}

We can get an analogous characterisation of $b_{f/g}(s)$ by using similar arguments.

\begin{example}[Separated variables]
	Take two sets of independent variables $x=(x_1,\ldots,x_k)$ and $y=(y_1,\ldots,y_l)$, i.e. $x_i\neq y_j$ for every $i=1,\ldots,k$ and $j=1,\ldots,l$. 
	Let $b_f(s)$ be the Bernstein- Sato polynomial of $f(x)$, and let $P(s,x,\partial_{x})$ be a differential operator such that
	\[P(s,x,\partial_{x})f^{s+1}=b_f(s)f^s.\]
	Then for any $\alpha\in\RR_{\geq 0}$ and $g(y)$ we have
	\[P(s,x,\partial_{x})f \bfga=b_f(s) \bfga,
	\]
	and thus $b_f(s)\mid b_{f/g}^{\alpha}(s)$.
\end{example}
\begin{example}\label{Exam:monomials}
	As a particular case of the preceding example, consider the function
	\[
	h(x_1,\ldots,x_n)=\frac{x_1^{m_1}\cdots x_k^{m_k}}{x_{k+1}^{m_{k+1}}\cdots x_n^{m_n}}.\] 
	Then for every $\alpha\in\RR_{\geq 0}$ the polynomial
	$b^\alpha_h(s)$ equals
	\[\prod_{i=1}^{k}\left(\prod_{j=1}^{m_i}\left(s+\frac{j}{m_i}\right)\right). \]
\end{example}

\begin{remark}\label{RmkNero}
	Since the operator $\delta(s)=1$ verifies the functional equation \[\delta(s)  \frac{1}{g^{s+\alpha}} = 1 \frac{1}{g^{s+\alpha}},\] we have that $b^\alpha_{1/g}(s)=1$  for  every $\alpha\in\RR_{\geq 0}$, and thus the Bernstein-Sato polynomial of $1/g$ has no roots. However, there exist non constant functions $g$ for which the topological zeta function of $1/g$ has non trivial poles \cite{GVLLem}. This phenomenon shows that a direct generalization of the strong monodromy conjecture in the case of meromorphic functions is not possible and deserves further investigation.
\end{remark}

We now show that for any value of $\alpha$, $-1$ is always a root of $b^\alpha_{f/g}(s)$ for most cases.
\begin{proposition}\label{PropNegOneIsRoot}
Let $R$ be either a polynomial ring $R=\KK[x_1,\dots,x_n]$ or a ring of holomorphic functions $R=\CC\{x_1,\dots,x_n\}$. Let $f,g\in R$ be nonzero elements such that $f$ is a nonzero divisor in $R/gR$ nor a unit. Then,  $b^\alpha_{f/g}(-1)=0$ for every $\alpha\in \RR_{\geq 0}$ such that $b^\alpha_{f/g}(s)$ exists.
\end{proposition}
\begin{proof}
We fix $\alpha\in \RR_{\geq 0}$. We set $M^\alpha_{f/g}= R_{fg} \ga$. This free module has an structure of $D_{R|\KK}$-module given by
\[
\partial \cdot \frac{h}{f^a g^b}\ga =   
\partial \left( \frac{h}{f^a g^b} \right)\ga 
-\alpha \frac{\delta(g)}{f^ag^{b+1}}\ga,
\]
where $\partial$ is a $\KK$-linear derivative on $R$.
There is a specialization  map of $D_{R|\KK}$-modules from $\cM^\alpha_{f/g}[s]\to M^\alpha_{f/g}$ by sending  $s\mapsto -1$. From the first part of Theorem \ref{Thm:GeneralizedFuncEq}, there exists $\delta(s)\in D_{R|\KK}[s]$ such that
\begin{equation*}
\delta(s)  f \bfga= b(s)\bfga.
\end{equation*}
After specializing, 
we have 
\begin{equation}\label{Eq -1 2}
\delta(-1)   \ga= b(-1)\frac{1}{f}\ga.
\end{equation}
Note that $\delta(-1)   \ga\in R_{g} \ga$. Then, there exists $u\in R$ and $\theta\in \NN$ such that
$\delta(-1)   \ga=\frac{u}{g^\theta}\ga$. Then Equation \ref{Eq -1 2} implies
$\frac{u}{g^\theta}=b(-1)\frac{1}{f}$, which is equivalent to $fu=b(-1) g^\theta$. 
If $\theta=0$, we have that $b(-1)=0$, because $f$ is not a unit.
Since $f$ is a nonzero divisor in $R/gR$, it is also a nonzero divisor in  $R/g^{\theta}R$ if $\theta \neq 0.$
Thus  $b^\alpha_{f/g}(-1)=0$.
\end{proof}

\begin{corollary}
Let $R$ be either a polynomial ring $R=\KK[x_1,\dots,x_n]$ or a ring of holomorphic functions $R=\CC\{x_1,\dots,x_n\}$. Let $f,g\in R$ be nonzero elements such that $f$ does not divide $g$. Then,  $b^\alpha_{f/g}(-1)=0$ for every $\alpha\in \RR_{>0}$ such that $b^\alpha_{f/g}(s)$ exists.
\end{corollary}
\begin{proof}
After removing common factors we can assume that $f$ and $g$ are relatively prime. 
If $g$ is a unit, this is a known fact of the classical Bernstein-Sato polynomial.
Then, 
the claim  follows  from Proposition \ref{PropNegOneIsRoot}, because $f$ and $g$ form a regular sequence. We note that $f$ is not a unit because it  does not divide $g$.
\end{proof}

\subsection{Rationality of the roots of the meromorphic Bernstein-Sato polynomial.} \label{Rational}

A classical result of Kashiwara \cite{Kashiwara}   asserts that the roots of the Bernstein-Sato polynomial are negative rational numbers. This result was later refined by Lichtin \cite{Lic89} by giving a set of candidate roots described in terms of the numerical data of the log resolution of the singularity. A similar result also holds in the meromorphic case for those $\alpha\in \RR_{\geq 0}$ such that $b_{f/g}^\alpha(s)$ exist. 

Let $ \cO_{\CC^n,0}$ be the ring of germs of holomorphic functions around a point $0\in \CC^n$, which we identify with $R= \CC\{ x_1,\dots , x_n\}$ by taking local coordinates. We take a small neighborhood of the origin  $U \subseteq \CC^n$ where $f$ and $g$ are holomorphic. Let 
$J_f=(\frac{df}{dx_1}, \dots , \frac{df}{dx_1})$ and $J_g=(\frac{dg}{dx_1}, \dots , \frac{dg}{dx_1})$ be the Jacobian ideals of $f$ and $g$ respectively. We may assume that the zero locus of $J_f$ is contained in the zero locus of $f$ and the same condition holds for $g$.

 Let $\pi : X \to U$ be a  log resolution of the meromorphic germ   $f / g$, which is in particular a log resolution of $ f^{-1}(0) \cup g^{-1}(0)$.
 In the sequel we use the notations in Section \ref{Sec:log_resolution}, but just recall that we associated to $f / g$ the divisor $\tilde{F} = \tilde{F}_0 + \tilde{F}_\infty$ where  $$\tilde{F}_0=\sum_{i \in I_0} N_{f/g,i}E_i \hskip 5mm \textrm{and} \hskip 5mm  \tilde{F}_\infty=\sum_{i \in I_\infty} N_{f/g,i} E_i. $$ 

{  In this subsection, we give a $D_{R[t]|\KK}$-module structure on $\cM^\alpha_{f/g}[s]$ where the new variable $t$ acts as multiplication by $f$. We define
$$
t\cdot \frac{h(s)}{f^a g^b}\bfga =   
\frac{h(s+1)}{f^a g^b} f \bfga,
$$
and 
$$
\partial_t \cdot \frac{h(s)}{f^a g^b}\bfga =   
\frac{h(s-1)}{f^a g^b} \frac{1}{f} \bfga.\\
$$

\noindent A simple computation shows that $\partial_t  t - t \partial_t =1$ and that $-\partial_t t$ acts as multiplication by $s$. Moreover $ts -st=t$ and we may consider the ring $D_{R|\KK}\langle s,t\rangle\subseteq D_{R[t]|\KK}$. Denote for simplicity
 $$ \cN_{f/g}^\alpha = D_{R|\KK}[s] \bfga \subseteq \cM^\alpha_{f/g}[s]. $$
We have that $\cN^\alpha_{f/g}$ is indeed a 
$D_{R|\KK}\langle s,t\rangle$-module.  As in the classical case, we may view the Bernstein-Sato polynomial of $f/g$ of order $\alpha$ as the minimal polynomial of the action of $-\partial_t t =s$ on 
$$\dfrac{\cN^\alpha_{f/g}}{ t \; \cN^\alpha_{f/g}}.$$}

\begin{theorem}\label{Thm:Kashiwara}

Let $R= \CC\{ x_1,\dots , x_n\}$ be the ring of holomorphic functions and consider $\alpha\in\RR_{\geq 0}$ such that $b_{f/g}^\alpha(s)\neq 0$ for  nonzero elements $f,g\in R$. 
The roots of $b_{f/g}^\alpha(s)$ are negative rational numbers. Moreover, they are contained in the set
$$\Bigg\{ \frac{k_i+1+ \ell }{N_{f/g, i}}  \;  ; \;  \ell \geq 0, \;  i\in I_0\Bigg\}.$$
\end{theorem}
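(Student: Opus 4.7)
The proof would follow the strategy of Kashiwara \cite{Kashiwara}, with the refinement of Lichtin \cite{Lic89} giving the explicit candidate set, adapted to the meromorphic setting. The guiding idea is to transfer the functional equation to a log resolution of $f/g$, where the local structure of the relevant section becomes monomial, and to extract the constraints on the roots from this local picture.

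First I would take a log resolution $\pi : X \to U$ of the meromorphic germ $f/g$ as described in Section~\ref{Sec:log_resolution}, and pull back the functional equation $\delta(s) f \bfga = b_{f/g}^\alpha(s) \bfga$ via $\pi^*$. At a point $p \in X$ with local coordinates $(z_1, \ldots, z_n)$ in which the components of $F$ through $p$ are $E_i = \{z_i = 0\}$ for $i = 1, \ldots, m$, the generator $\bfga$ pulls back (up to a unit factor of the form $(u/v)^s v^{-\alpha}$) to the monomial
\[ \prod_{i=1}^m z_i^{s N_{f/g,i} - \alpha N_{g,i}}, \]
while the volume form contributes an additional factor $\prod z_i^{k_i}$ coming from the relative canonical divisor $K_\pi$.

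Next I would restrict attention to points $p$ lying on at least one component $E_i$ with $i \in I_0$ and no component with $i \in I_\infty$. By the property that $I_0$ and $I_\infty$ components are separated by dicritical components in the log resolution, such points suffice for the analysis: on a dicritical component $N_{f/g,j}=0$, so no root constraint is imposed; at points meeting only $I_\infty$ components the pulled-back section is holomorphic in $1/\tilde{g}$ and again yields no constraint on $b_{f/g}^\alpha(s)$. At the remaining points, absorbing the unit contributions into the differential operator in the standard Kashiwara way reduces the equation to a functional equation for a pure monomial.

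Third, I would invoke the standard analysis of the $b$-function of a monomial: the roots of the Bernstein-Sato polynomial of $z_1^{N_1}\cdots z_m^{N_m}$ lie among the negative rationals $-j/N_i$, and after twisting the section by $\prod z_i^{k_i - \alpha N_{g,i}}$ and iterating the equation to account for higher powers $\ell$ of $\tilde{f}$, every admissible root $s_0$ of $b_{f/g}^\alpha(s)$ must satisfy $s_0 N_{f/g,i} + (k_i + 1 + \ell)=0$ for some $\ell \in \ZZ_{\geq 0}$ and some $i \in I_0$. Both the rationality of the roots and the claimed containment follow.

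The main obstacle I anticipate is the careful bookkeeping at points $p$ with $I_p \subseteq I_0$: one must verify that the $\alpha$-twist $-\alpha N_{g,i}$ in the exponent does not introduce additional candidate roots (it only contributes an integer shift absorbable into the choice of $\ell$) and that the unit factors $(u/v)^s v^{-\alpha}$ can be handled without altering the polynomial $b$. The classical framework of $V$-filtrations along the normal crossing divisor on the log-resolved $D$-module supplies the technical input needed; the meromorphic setting only adds the verification that $I_\infty$ components and dicriticals contribute no new candidates to the root set.
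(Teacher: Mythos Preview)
Your overall architecture is right---reduce to the log resolution, compute locally on the normal crossing divisor, and read off the candidate roots---and this matches the paper's strategy. But the bridge between the local computation on $X$ and the polynomial $b_{f/g}^\alpha(s)$ on $U$ is missing, and the step you describe as ``pull back the functional equation via $\pi^*$'' goes in the wrong direction. Differential operators do not pull back along $\pi$; the relation between $b_{f/g}^\alpha(s)$ and the local polynomials $b_{\tilde f/\tilde g}^\alpha(s)$ has to be established by \emph{pushing forward} the $D_X$-module $\cN_{\tilde f/\tilde g}^\alpha = D_X[s]\,\bfgt$ under the proper map $\pi$. The paper does this by forming $\cN = \mathbb{R}^0\pi_+ \cN_{\tilde f/\tilde g}^\alpha$, showing it is subholonomic so that $\cN/t\cN$ is holonomic with a minimal polynomial $b_{\cN}(s)$, and then using the canonical section $u\in\cN$ to get a $D\langle s,t\rangle$-submodule $\cN' = D[s]u$ surjecting onto $\cN_{f/g}^\alpha$. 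Holonomicity of $\cN/\cN'$ gives $t^\ell\cN\subseteq\cN'$ for some $\ell$, and this is where the integer shift by $\ell$ in the divisibility relation
\[
b_{f/g}^\alpha(s)\ \big|\ b_{\tilde f/\tilde g}^\alpha(s)\,b_{\tilde f/\tilde g}^\alpha(s+1)\cdots b_{\tilde f/\tilde g}^\alpha(s+\ell)
\]
actually comes from. Your phrase ``iterating the equation to account for higher powers $\ell$ of $\tilde f$'' gestures at this but does not supply the mechanism; without the direct image and the finiteness (holonomicity) argument, nothing forces the downstairs $b$-function to divide a product of shifts of the upstairs one. The Lichtin refinement likewise requires passing to right $D$-modules via $\omega_X\otimes_{\mathcal O_X}(-)$ so that the Jacobian factor $\prod z_i^{k_i}$ enters the section whose $b$-function is being computed; it is not simply a matter of ``twisting'' in a local chart.

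A secondary issue: your claim that the $\alpha$-twist $-\alpha N_{g,i}$ ``only contributes an integer shift absorbable into $\ell$'' cannot be right as stated, since $\alpha\in\RR_{\geq 0}$ is arbitrary. What actually happens is that the local $b$-function $b_{\tilde f/\tilde g}^\alpha(s)$ is \emph{independent} of $\alpha$: at a point with $\tilde f/\tilde g = z_1^{N_1}\cdots z_m^{N_m}$ it equals $\prod_i\prod_{j=1}^{N_i}(s+j/N_i)$, and at a point with $\tilde f/\tilde g$ a reciprocal monomial it equals $1$. So the $\alpha$-dependence disappears upstairs for a different reason than you suggest.
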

\begin{proof}
Let $\pi : X \to U$ be a  log resolution of the meromorphic germ  $f / g$ and let $D_X$ and $D$ denote the corresponding rings of differential operators.  Assume that in local coordinates around a point $ p\in X$ we have either
$$\frac{\tilde{f}}{\tilde{g}} =  \;   z_1^{N_{1}} \cdots z_m^{N_{m}} \hskip 5mm \textrm{or} \hskip 5mm  \frac{\tilde{f}}{\tilde{g}} =   \;  \frac{1}{z_1^{N_1} \cdots z_m^{N_m}}.$$
Therefore, from Example \ref{Exam:monomials} and Remark \ref{RmkNero}, locally at the point $p$ the Bernstein-Sato polynomial of order $\alpha$ is either 
$$b_{\tilde{f}/\tilde{g}}^\alpha(s)= \prod_{i=1}^{m}\left(\prod_{j=1}^{N_i}\left(s+\frac{j}{N_i}\right)\right) \hskip 5mm \textrm{or} \hskip 5mm b_{\tilde{f}/\tilde{g}}^\alpha(s)=1.$$
Consider a cover of $X$ by affine open subsets. On each of these subsets we can use local coordinates as before and get the corresponding local Bernstein-Sato polynomial with negative rational roots. The global Bernstein-Sato polynomial, that we also denote by $b_{\tilde{f}/\tilde{g}}^\alpha(s)$ if no confusion arises, is the least common multiple of such local Bernstein-Sato polynomials.

Then the rationality result boils down to proving the existence of an integer $\ell\geq 0$ such that 
\begin{equation}\label{Eq: multiplicative}
 b_{f/g}^\alpha(s)  \; \vert \; b_{\tilde{f}/\tilde{g}}^\alpha(s) b_{\tilde{f}/\tilde{g}}^\alpha(s+1) \cdots b_{\tilde{f}/\tilde{g}}^\alpha(s + \ell).\end{equation}

The proof of this fact in the meromorphic case is analogous to the classical case considered by Kashiwara. We only have to be careful when 
dealing with the module
$$ \cN_{\tilde{f}/\tilde{g}}^\alpha = D_{X}[s] \bfgt.$$

If $\frac{\tilde{f}}{\tilde{g}} =  \;   z_1^{N_{1}} \cdots z_m^{N_{m}}$  we have that $\cN_{\tilde{f}/\tilde{g}}^\alpha $ is subholonomic as $D_X$-module as in the classical case. Specifically, its characteristic variety  is the closure  of $\{(x, sdf(x)) \;  ; \;  f(x)\neq 0 , s\in \CC \}$ in the cotangent bundle $T^\ast X$ and thus it has dimension $ n + 1$. 
$\cN_{\tilde{f}/\tilde{g}}^\alpha $ has the same  characteristic variety when $\frac{\tilde{f}}{\tilde{g}} =   \;  \frac{1}{z_1^{N_1} \cdots z_m^{N_m}}$ so it is subholonomic as well.

The rest of the proof follows the same lines of reasoning of Kashiwara so we  just sketch the key points and refer to his work  for details \cite{Kashiwara}. Let 
\[\cN= \mathcal{H}^0 (\pi_+ \cN_{\tilde{f}/\tilde{g}}^\alpha)= \mathcal{H}^0 (\pi_\ast( D_{U\leftarrow X} \otimes_D^{\bf L} \cN_{\tilde{f}/\tilde{g}}^\alpha))\] 
be the degree zero direct image of  $\cN_{\tilde{f}/\tilde{g}}^\alpha $, where $ D_{U\leftarrow X}$ denotes the transfer bimodule. There is a canonical section $u \in \cN$ associated  to 
$ {\bf 1}_{U\leftarrow X} \otimes \bfgt \in  D_{U\leftarrow X} \otimes_D^{\bf L} \cN_{\tilde{f}/\tilde{g}}^\alpha$ that allows us to describe a $D$-submodule
$$  \cN'= D[s] u \subseteq \cN.$$
This inclusion is indeed a morphism  of $D\langle s,t\rangle$-modules. We have that  $\cN$ is subholonomic as $D$-module \cite[Lemma 5.7]{Kashiwara} and moreover $\cN / t \cN$ is holonomic so the action of $s$ has a minimal polynomial which we denote $b_{\cN}(s)$. We have that $\cN' / t \cN'$ is holonomic as well which leads to a polynomial $b_{\cN'}(s)$. Using that $\cN / \cN'$ is also holonomic,   there exist $\ell \geq 0$ large enough such that $t^\ell \cN \subseteq \cN'$ \cite[Proposition 5.11]{Kashiwara}. From the relation
$$b_{\cN}(s+j) t^j \cN= t^j b_{\cN}(s) \cN \subseteq t^{j+1}\cN,$$  $j\geq 0$, we obtain $$b_{\cN}(s+\ell) \cdots b_{\cN}(s) \cN' \subseteq b_{\cN}(s+\ell) \cdots b_{\cN}(s) \cN \subseteq t^{\ell +1} \cN \subseteq t \cN'$$ and thus $b_{\cN'}(s) \; \vert \; b_{\cN}(s+\ell) \cdots b_{\cN}(s)$. Finally, we have to relate this equation to (\ref{Eq: multiplicative}).
On the one hand, since $ b_{\tilde{f}/\tilde{g}}^\alpha(s) \cN_{\tilde{f}/\tilde{g}}^\alpha  \subseteq t  \cN_{\tilde{f}/\tilde{g}}^\alpha $ there exist a $D_X$-module endomorphism $h: \cN_{\tilde{f}/\tilde{g}}^\alpha  \rightarrow \cN_{\tilde{f}/\tilde{g}}^\alpha $ such that $b_{\tilde{f}/\tilde{g}}^\alpha(s) = t \circ h$. Applying the functor $\mathcal{H}^0 \pi_+$ we get 
$$b_{\tilde{f}/\tilde{g}}^\alpha(s) \cN = t \circ \mathcal{H}^0( \pi_+(h) \cN) \subseteq t \cN$$ 
and thus $b_{\cN}(s) \; \vert \; b_{\tilde{f}/\tilde{g}}^\alpha(s)$. 
On the other hand we have a surjection of $D\langle s,t\rangle$-modules $\cN' \twoheadrightarrow \cN_{f/g}^\alpha $ and thus $b_{\cN'}(s) \cN_{f/g}^\alpha \subseteq t \cN_{f/g}^\alpha$ which gives  $b_{f/g}^\alpha(s)) \; \vert \;  b_{\cN'}(s)$ and Equation (\ref{Eq: multiplicative}) follows.

The refinement given by Lichtin \cite{Lic89} requires to pass from left to right $D$-modules in order to plug in all the information given by the relative canonical divisor. Namely, for a left $D$-module $\cM$ we consider the right $D$-module $\cM^{(r)}:=\omega_{R} \otimes_R \cM$. In local coordinates, this operation is given by the involution on $D$ that sends a differential operator $\delta$ to its adjoint $\delta^\ast$ described uniquely by the properties $(\delta \delta')^\ast = \delta'^\ast \delta^\ast$, $f^\ast = f $ for any $f\in R$ and $\partial_i ^\ast = -\partial_i$. This gives an equivalence between left and right $D$-modules. We can extend this to an equivalence between left and right $D\langle s, t \rangle$-modules by taking $t^\ast =t$, $\partial_t^\ast = -\partial_t$ and $s^\ast= (-\partial_t t)^\ast = t\partial_t = \partial_t t -1 = -s-1$.
For an element $u$ in a $D\langle s, t \rangle$-module $\cM$ we set $u^\ast = dx_1\wedge \cdots \wedge d{x_n} \otimes u$. Then we have $u^\ast \delta^\ast = (\delta u)^\ast$ for any $\delta \in D\langle s, t \rangle$. 

Now, for $ u= \bfga \in  \cN_{f/g}^\alpha$ we have that 
the functional equation 
$$
\delta(s)  f \bfga= b(s)\bfga
\hskip 5mm 
\textrm{becomes}
\hskip 5mm
f \left(\bfga\right )^\ast (\delta(s))^\ast = (b(s))^\ast \left(\bfga \right)^\ast,
$$
and the minimal polynomials satisfying these equations are related by
\begin{equation}\label{Eq:dualBS}
 b_{f/g}^\alpha(s) = b_u(s)= b_{u^\ast}(-s-1)= b_{(f/g)^\ast}^\alpha (-s-1).
 \end{equation} In particular $ b_{u^\ast}(s)$ is the minimal polynomial of the action of $s$ on $(\cN_{f/g}^\alpha)^{(r)} / (\cN_{f/g}^\alpha)^{(r)} t$.

Let $\pi : X \to U$ be a  log resolution of   $f / g$. We may construct  $(\cN_{\tilde{f}/\tilde{g}}^\alpha)^{(r)} = \omega_X \otimes_{\cO_X}  \cN_{\tilde{f}/\tilde{g}}^\alpha$  as before but, following Lichtin, we consider the $D_X\langle s,t \rangle$-submodule
$$\cN_v:=  v D_X[s], $$  with $v= \pi^\ast(dx_1\wedge \cdots \wedge dx_n) \otimes \bfgt \in  \left(\cN_{\tilde{f}/\tilde{g}}^\alpha\right)^{(r)}$. In local coordinates  around a point $ p\in X$ we have either
$$v =  \; w   z_1^{N_{1}s +k_1} \cdots z_m^{N_{m} s +k_m} \hskip 5mm \textrm{or} \hskip 5mm  v =   \; w  z_1^{-N_1s +k_1} \cdots z_m^{-N_m s +k_m},$$ where $w$ is a unit. The Bernstein-Sato polynomial associated to these elements is either 
$$b_{v}(s)= \prod_{i=1}^{m}\left(\prod_{j=1}^{N_i}\left(s+\frac{k_i +j}{N_i}\right)\right) \hskip 5mm \textrm{or} \hskip 5mm b_{v}(s)=1,$$ 
and we have a relation with the Bernstein-Sato polynomial associated to $\cN_v$, i.e. the minimal polynomial of the action of $s$ on $\cN_v / \cN_ v t$, given by $b_{\cN_v}(s)= b_v(-s-1)$.  By considering a cover of $X$ we get the global Bernstein-Sato polynomial that we also denote by $b_{\cN_v}(s)$ if no confusion arises.

Then the refined version of Lichtin result follows from proving the existence of an integer $\ell\geq 0$ such that the polynomial $b_{u^\ast}(s)$ in Equation (\ref{Eq:dualBS}) satisfies
\begin{equation}\label{Eq: multiplicative2}
b_{u^\ast}(s)  \; \vert \; b_{\cN_v}(s) b_{\cN_v}(s-1) \cdots b_{\cN_v}(s - \ell).
\end{equation}
The rest of the proof is analogous to Kashiwara's proof of rationality but working in the category of right $D$-modules. 

\end{proof}

\begin{corollary}\label{Cor:Kashiwara}
Let $f,g\in R= \CC\{ x_1,\dots , x_n\}$ be  nonzero holomorphic functions. 
The roots of $b_{f/g}(s)$ are negative rational numbers. Moreover, they are contained in the set
$$\Bigg\{ \frac{k_i+1+ \ell }{N_{f/g, i}}  \;  ; \;  \ell \geq 0, \;  i\in I_0\Bigg\}.$$
\end{corollary}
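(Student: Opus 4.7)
The plan is to deduce this corollary directly from Theorem \ref{Thm:Kashiwara} by specializing to $\alpha = 1$, combined with the divisibility $b_{f/g}(s) \mid b_{f/g}^{1}(s)$ observed in Remark \ref{takeuchi}. Since the set of candidate roots described in the corollary is identical to the one produced by Theorem \ref{Thm:Kashiwara}, no new geometric input (that is, no further analysis of the log resolution) is needed; the work is entirely in transferring the functional equation between $\cM^{1}_{f/g}[s]$ and $\cM^{0}_{f/g}[s]$.

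First, I would make the divisibility $b_{f/g}(s) \mid b_{f/g}^{1}(s)$ explicit by revisiting the proof of Theorem \ref{Thm:Existence}. Starting from a functional equation
$$\delta(s)\, f\, \bfgOne = b_{f/g}^{1}(s)\, \bfgOne$$
in $\cM^{1}_{f/g}[s]$, the isomorphism $\phi \colon \cM^{1}_{f/g}[s] \to \cM^{0}_{f/g}[s]$ of $D_{R|\KK}[s]$-modules transports this identity to $\delta(s)\tfrac{f}{g}\bfg = b_{f/g}^{1}(s)\tfrac{1}{g}\bfg$, and after multiplying by $g$ we obtain a functional equation of the form used to define $b_{f/g}(s)$. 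By the minimality of $b_{f/g}(s)$, this forces $b_{f/g}(s) \mid b_{f/g}^{1}(s)$.

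Next, I would invoke Theorem \ref{Thm:Kashiwara} with $\alpha = 1$: the roots of $b_{f/g}^{1}(s)$ are negative rational numbers, all contained in the set
$$\Bigg\{ \frac{k_i+1+\ell}{N_{f/g,i}} \;;\; \ell \geq 0,\; i \in I_0 \Bigg\}.$$
Since the roots of a divisor of $b_{f/g}^{1}(s)$ form a subset of its roots, the same conclusion applies to $b_{f/g}(s)$. This finishes the proof.

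There is no real obstacle here: the statement is a packaging of Theorem \ref{Thm:Kashiwara}. The only point requiring care is verifying that the isomorphism $\phi$ really preserves the polynomial attached to the functional equation, which is immediate from its definition because $\phi$ is $\KK[s]$-linear and intertwines the $D_{R|\KK}$-action. One could alternatively reprove rationality for $b_{f/g}(s)$ from scratch by repeating the Kashiwara--Lichtin argument in $\cM^{0}_{f/g}[s]$, but the reduction through $\alpha = 1$ avoids any duplication of work.
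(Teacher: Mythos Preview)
Your proposal is correct and matches the paper's own proof, which consists of the single sentence ``It follows from the fact that $b_{f/g}(s)$ divides $b_{f/g}^1(s)$.'' Your additional explanation of why this divisibility holds (via the isomorphism $\phi$ from the proof of Theorem~\ref{Thm:Existence}) simply unpacks what the paper leaves implicit.
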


\begin{proof}
It follows from the fact that $b_{f/g}(s)$ divides $b_{f/g}^1(s)$.
\end{proof}

\subsection{Takeuchi's meromorphic Bernstein-Sato polynomial}

Takeuchi \cite{takeuchi2023}  independently introduced a notion of Bernstein-Sato polynomials for meromorphic functions which differs from the one in Definition \ref{DefBSMero}. When $R$ is a polynomial ring $R=\KK[x_1,\dots,x_n]$ or a ring of holomorphic functions $R=\CC\{x_1,\dots,x_n\}$, he defines the Bernstein-Sato polynomial  $b_{f/g,\alpha}^{\rm mero}(s)$  of order $\alpha$ (there $\alpha$ is a non negative integer) of the meromorphic function $f/g$ as the monic polynomial of smallest degree satisfying the functional equation
\begin{equation} \label{Take1}
\delta_1(s) \frac{f}{g}  {{\bf \frac{f^s}{g^{s+\alpha}}}} + \cdots + \delta_\ell(s) \frac{f^\ell}{g^\ell}  {{\bf \frac{f^s}{g^{s+\alpha}}}}= b(s)   {{\bf \frac{f^s}{g^{s+\alpha}}}},
\end{equation}
for some $\delta_i(s)\in D_{R|\KK}[s]$ and $\ell\geq 1$ large enough.

As Takeuchi  points out \cite[Discussion in \textsection 1]{takeuchi2023},  one has that 
\begin{equation}\label{FunctionalEqTakeuchi}
b_{f/g,\alpha}^{\rm mero}(s) \hskip 2mm | \hskip 2mm b_{f/g}^{\alpha}(s).
\end{equation}

We now give a different proof of the existence of the Bernstein-Sato polynomial for a meromorphic function as in Equation \ref{Take1}. Our proof relies in the holonomicity  of the module  $\cM^\alpha_{f/g}(s)$, cf. Lemma \ref{Lemma:Holonomicity}, and not on $V$-filtrations as the original work \cite{takeuchi2023}.

\begin{theorem}[{\cite[Theorem 1.1]{takeuchi2023}}]\label{Thm:Takeuchi}
	Let $R=\KK[x_1,\ldots,x_n]$.
	Let $f,g\in R$ be nonzero elements and take $\alpha\in\RR_{\geq 0}$.  
	There exists $b(s)\in \KK[s]\setminus \{0\}$,  $\delta_i(s)\in D_{R|\KK}[s]$ and $\ell\geq 1$ large enough such that
	\begin{equation*}
		\delta_1(s) \frac{f}{g}  \bfga + \cdots + \delta_\ell(s) \frac{f^\ell}{g^\ell} \bfga= b(s)   \bfga.
	\end{equation*}
\end{theorem}

\begin{proof}
	Recall from Definition \ref{Def:Module_M_alpha} that $\cM^\alpha_{f/g}[s]= R_{fg}[s] \bfga$, $D(s)=D_{R(s)| \KK(s)}$, and $\cM^\alpha_{f/g}(s)=\cM^\alpha_{f/g}[s]\otimes_{\KK[s]}\KK(s)$. We consider the following collection of $D(s)$-modules
	\[N_t:= D(s) \cdot  \biggl\{ \frac{f^j}{ g^j } \bfga \; | \; j\geq t\biggr\}.
	\]
	By Lemma \ref{Lemma:Holonomicity}, the chain $N_1 \supseteq N_2 \supseteq N_3 \supseteq\ldots$
	stabilizes and thus, there exists $t$ such that $N_t=N_{t+1}$. Therefore
	$$\frac{f^t}{g^t} \bfga \in D(s) \cdot  \biggl\{ \frac{f^j}{g^j} \bfga \; | \; j > t\biggr\},$$ and there exist $\tilde{\gamma}_1(s), \dots, \tilde{\gamma}_\ell(s) \in D(s)$, with $\ell\geq 1$ large enough, such that 
	\begin{equation*} \label{Take2}
		\tilde{\gamma}_1(s) \frac{f^{t+1}}{g^{t+1}} \bfga   + \cdots + \tilde{\gamma}_\ell(s) \frac{f^{t+\ell}}{g^{t+\ell}} \bfga =   \frac{f^t}{g^t} \bfga. 
	\end{equation*}
	
	Consider $\widetilde{ \delta}_i(s)=\widetilde{ b}(s)\widetilde{\gamma}_i(s)$, where $\widetilde{b}(s) \in \KK[s]\setminus\{0\}$ is the minimum common multiple of the denominators of  $\widetilde{\gamma}_i(s)$ and thus  $\widetilde{  \delta}_i(s)\in  D_{R|\KK}[s]$. Then we have 
	\begin{equation*} \label{Take3}
		\tilde{\delta}_1(s) \frac{f^{t+1}}{g^{t+1}} \bfga   + \cdots + \tilde{\delta}_\ell(s) \frac{f^{t+\ell}}{g^{t+\ell}} \bfga =  \widetilde{ b}(s) \frac{f^t}{g^t} \bfga. 
	\end{equation*}
	Applying the isomorphism $\psi_{\alpha,\ell}$ defined in Remark \ref{Rem:iso2}, we obtain
	\begin{equation*} 
		\tilde{\delta}_1(s-t) \frac{f}{g}\bfga   + \cdots + \tilde{\delta}_\ell(s-t) \frac{f^\ell}{g^\ell} \bfga =  \widetilde{b}(s-t)  \bfga. 
	\end{equation*}
	Setting $\delta_i(s)= \widetilde{\delta}_i(s-t) $ and $b(s)= \widetilde{b}(s-t)$ we get the desired result.
	
\end{proof}

\section{Archimedean Local Zeta Functions}\label{Sec4}

In this section we use the Bernstein-Sato polynomial of $f/g$ to study Archimedean local zeta functions.  Let $ \cO_{\CC^n,0}$ be the ring of germs of holomorphic functions around a point $0\in \CC^n$, which we identify with $R= \CC\{ x_1,\dots , x_n\}$ by taking local coordinates. For definiteness we take a small neighborhood of the origin  $U \subseteq \CC^n$ where $f$ and $g$ are both holomorphic. By a test function we mean a smooth function with compact support on $\mathbb{C}^n$. Then the local zeta function attached to a test function $\phi$ and $f/g$ is defined as
\[
Z_{\phi}(s,f/g)=\bigintssss_{U\setminus H}\phi(x) \left\vert \frac{f(x)}{g(x)}\right\vert ^{2s}\ \mathrm{d}x,
\]
where $s$ is a complex number and $H=f^{-1}(0)\cup g^{-1}(0)$. When $g=1$ (the classical case), local zeta functions were introduced by Gel'fand and Shilov in the 50's  \cite{Gel-Shi}. In that case is not difficult to show that $Z_{\phi}(s,f/1)=Z_{\phi}(s,f)$ converges on the half plane $\{s\in\mathbb{C}\ ; \operatorname{Re}(s)>0 \}$ and defines a holomorphic function there. In contrast, the convergence of the parametric integral $Z_{\phi}(s,f/g)$ is a delicate matter as Veys and Z\'u\~niga-Galindo noted  \cite[Remark 2.1 (1)]{Ve-Zu}. In particular they explain that the convergence of the integral does not follow from the fact that $\phi$ has compact support and they use an embedded resolution of  $H$ to show that the integral has a region of convergence \cite[Theorem 3.5 (1)]{Ve-Zu}. As in the classical case, the meromorphic continuation of $Z_{\phi}(s,f/g)$ does not depend on the set $U$.

Building on the properties of the log canonical threshold of $f$ and $g$ we describe  a simple region of convergence for $Z_{\phi}(s,f/g)$. First, we recall 
the definition of the log canonical threshold of $f$ and some of its properties regarding  the local zeta function of $f$ and $\phi$.
\begin{definition}\label{Def:logCan}
	Let $f$ be a holomorphic function in an open set $U\subseteq \mathbb{C}^n$. The log-canonical threshold of $f$ (at the origin) is the number
	\[\lct_0(f)=\sup\left\{\lambda \in \RR_{>0}\ ;\ \int_{B_\varepsilon(0)}  |f|^{-2\lambda} < \infty, \text{ for some } \varepsilon>0 \right\}.\]
\end{definition}

It is known that $\lct_0(f)$ is a positive rational number. Furthermore,  we have that 	$Z_{\phi}(s,f)$ is a holomorphic function on $\operatorname{Re}(s)>-\lct_0(f)$. Using this fact one may show that $Z_{\phi}(s,g^{-1})=\int_{U\setminus g^{-1}(0)}\phi(x) \left\vert g(x)\right\vert ^{-2s}\ \mathrm{d}x$, is holomorphic on $\operatorname{Re}(s)<\lct_0(g)$.

\begin{lemma}\label{Lem:ConvergenceZetaF}
Let $R= \CC\{ x_1,\dots , x_n\}$ be the ring of holomorphic functions and let $f,g\in R$ be  nonzero elements. Then, the integral $Z_{\phi}(s,f/g)$ converges for $-\lct_0(f)< \operatorname{Re}(s)<\lct_0(g)$. Furthermore, it defines a holomorphic function there.
\end{lemma}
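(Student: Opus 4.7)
My plan is to split the argument into (i) pointwise convergence of the integral on the strip and (ii) upgrading this to holomorphy via a dominating function argument, thereby reducing everything to the two one-sided results already recalled: $Z_{\phi}(s,f)$ is holomorphic for $\operatorname{Re}(s)>-\lct_0(f)$ and $Z_{\phi}(s,g^{-1})$ is holomorphic for $\operatorname{Re}(s)<\lct_0(g)$. Writing $s=\sigma+i\tau$, we have $\bigl||f/g|^{2s}\bigr|=|f/g|^{2\sigma}$, so it suffices to control the real integral $\int|\phi(x)|\,|f(x)|^{2\sigma}|g(x)|^{-2\sigma}\,dx$. Observe also that $H=f^{-1}(0)\cup g^{-1}(0)$ has Lebesgue measure zero, so integration over $U\setminus H$ and over $U$ give the same value.

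For convergence I would split on the sign of $\sigma$. Since $\phi$ is compactly supported in $U$ and $f,g$ are holomorphic, both $|f|$ and $|g|$ are bounded above by some constant $M$ on $\operatorname{supp}\phi$. When $0\le\sigma<\lct_0(g)$ I would use $|f|^{2\sigma}\le M^{2\sigma}$ to dominate the integrand by a constant multiple of $|\phi|\,|g|^{-2\sigma}$, whose integral is finite by the holomorphy of $Z_{\phi}(s,g^{-1})$. When $-\lct_0(f)<\sigma<0$ I would symmetrically use $|g|^{-2\sigma}=|g|^{2|\sigma|}\le M^{2|\sigma|}$ to reduce to $\int|\phi|\,|f|^{2\sigma}\,dx$, which is finite by the holomorphy of $Z_{\phi}(s,f)$.

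To upgrade convergence to holomorphy, I would exhibit a dominating function that works locally uniformly in $s$. Given a compact $K$ inside the strip $-\lct_0(f)<\operatorname{Re}(s)<\lct_0(g)$, choose $\sigma_1,\sigma_2$ with $-\lct_0(f)<\sigma_1\le\operatorname{Re}(s)\le\sigma_2<\lct_0(g)$ for every $s\in K$. The elementary inequality $r^{2\sigma}\le r^{2\sigma_1}+r^{2\sigma_2}$ for $\sigma\in[\sigma_1,\sigma_2]$, proved by treating the cases $r\ge 1$ and $r\le 1$ separately, then yields the pointwise bound
\[
\bigl|\phi(x)\,|f(x)/g(x)|^{2s}\bigr|\le |\phi(x)|\bigl(|f(x)/g(x)|^{2\sigma_1}+|f(x)/g(x)|^{2\sigma_2}\bigr),
\]
whose right hand side is integrable by the previous step applied at $\sigma_1$ and $\sigma_2$. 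Since $s\mapsto|f(x)/g(x)|^{2s}=\exp\bigl(2s\log|f(x)/g(x)|\bigr)$ is entire in $s$ for each fixed $x\in U\setminus H$, the standard result on holomorphic dependence of parametric integrals (Morera's theorem combined with Fubini, or differentiation under the integral sign) implies that $Z_{\phi}(s,f/g)$ is holomorphic on the strip.

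I do not expect any serious obstacle; the argument is a routine piece of real analysis once one notices that compactness of $\operatorname{supp}\phi$ allows one to trade the zeros of $f$ against those of $g$ depending on the sign of $\sigma$. The only point that deserves a moment of thought is that common zeros of $f$ and $g$ create no additional trouble, because the bounds above isolate a single singular factor at a time rather than requiring simultaneous control of both denominators.
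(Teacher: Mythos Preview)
Your proof is correct and follows essentially the same approach as the paper: split on the sign of $\sigma=\operatorname{Re}(s)$, bound the harmless factor using compactness of $\operatorname{supp}\phi$, and reduce to the one-sided results for $Z_\phi(s,f)$ and $Z_\phi(s,g^{-1})$. Your treatment is in fact cleaner than the paper's: the paper justifies the case $-\lct_0(f)<\operatorname{Re}(s)\le 0$ by asserting that $|f/g|^{2\operatorname{Re}(s)}$ is ``continuous'', which is not literally true at zeros of $f$, whereas you correctly isolate the bounded factor $|g|^{-2\sigma}$ and invoke integrability of $|f|^{2\sigma}$; and for holomorphy the paper simply cites the classical argument via $\partial/\partial\bar s$ and dominated convergence, while you supply the explicit uniform bound $r^{2\sigma}\le r^{2\sigma_1}+r^{2\sigma_2}$ needed to run that argument.
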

\begin{proof}
	First we show that $Z_{\phi}(s,f/g)$ is finite on the interval  $-\lct_0(f)< \operatorname{Re}(s) \leq 0$. Note that in this region the function $|g(x)|^{-2s}$ is well defined and continuous over $U\setminus H$. Then
	\[\left|\bigintssss_{U\setminus H}\phi(x) \left\vert \frac{f(x)}{g(x)}\right\vert ^{2s}\ \mathrm{d}x\right|\leq \bigintssss_{U\setminus H}|\phi(x)| \left\vert \frac{f(x)}{g(x)}\right\vert ^{2\operatorname{Re}(s)}\ \mathrm{d}x<\infty,\]
	since the the support of $\phi$ is compact and $\left\vert \frac{f(x)}{g(x)}\right\vert ^{2\operatorname{Re}(s)}$ is continuous. A symmetric argument shows that $Z_{\phi}(s,f/g)<\infty$ for  $0\leq \operatorname{Re}(s)<\lct_0(g)$. The fact that $Z_{\phi}(s,f/g)$ defines a holomorphic function on  $-\lct_0(f)< \operatorname{Re}(s)<\lct_0(g)$ can be proved as in the classical case \cite[Theorem 3.1]{IgusaOld} by showing that  $\partial/\partial \bar{s}(Z_{\phi}(s,f/g))=0$, which follows in particular from Lebesgue's dominated convergence theorem.
\end{proof}
\begin{remark} 
	The region of convergence of Lemma \ref{Lem:ConvergenceZetaF} is in general smaller that the region of convergence described in the work of Veys and Zu\~niga-Galindo \cite{Ve-Zu}. According to their Example 3.13 (4), the integral $Z_{\phi}(s,y^2+x^4/x^2+y^4)$ converges in  $-3/2< \operatorname{Re}(s)<3/2$, whereas Lemma \ref{Lem:ConvergenceZetaF} shows that the integral converges in $-3/4< \operatorname{Re}(s)<3/4$. We point out that the region of convergence of Veys and Zu\~niga-Galindo is optimal in the sense that the endpoints give actual poles for $Z_{\phi}(s,f/g)$, i.e. $-3/2$ and $3/2$ are poles of $Z_{\phi}(s,y^2+x^4/x^2+y^4)$.
\end{remark}

Another property of $Z_{\phi}(s,f)$ that one may also study is the existence of a meromorphic continuation to the whole complex plane. This fact was conjectured by Gel'fand in the 50's and proved by Bernstein and Gel'fand \cite{Ber-Gel}, Atiyah  \cite{Ati} using resolution of singularities, and  Bernstein  \cite{Ber72} using the Bernstein-Sato polynomial. In the case of meromorphic functions, Veys and  Z\'u\~niga-Galindo \cite[Theorem 3.5 (2)]{Ve-Zu}  showed   that $Z_{\phi}(s,f/g)$ has a meromorphic continuation to the whole complex plane and its poles are described by means of the numerical data of an embedded resolution of singularities of $H$. By using the theory of Bernstein-Sato polynomials developed in Section \ref{Sec3} we present a proof of the meromorphic continuation of $Z_{\phi}(s,f/g)$, resembling the original proof of Bernstein \cite{Ber72}, giving in addition a shorter list of candidate poles.
 
For a differential operator $\delta(s)\in D_{R|\CC}[s]$ we denote the \textit{conjugate operator} by $\overline{\delta}(s)$. This is the operator obtained from $\delta$ by replacing $x_i$ with $\overline{x_i}$ and $\partial_{x_i}$ with $\partial_{\overline{x_i}}$. A simple calculation shows that when the first functional equation in Theorem \ref{Thm:GeneralizedFuncEq} holds one has  
\begin{equation}\label{Eq:IdentityConjugation}
	\delta(s)\overline{\delta}(s) \frac{|f|^{2(s+1)}}{|g|^{2(s+\alpha)}}= b^\alpha_{f/g}(s)^2\frac{|f|^{2s}}{|g|^{2(s+\alpha)}} \text{ in } \cM_{f/g}^\alpha[s].
\end{equation}
\begin{theorem}\label{Thm:MerContZetaF}
{
Let $R= \CC\{ x_1,\dots , x_n\}$ be the ring of holomorphic functions and $f,g\in R$ be  nonzero elements. 
	Let 
	\[\alpha=\max_{k\geq 1}\left\{\alpha^\prime=\frac{\lct_0 g}{k} \ ; \ b^{\alpha^\prime}_{f/g}(s)\neq 0\right\}\quad\text{and}\quad \beta=\max_{k\geq 1}\left\{-\beta^\prime=\frac{\lct_0 f}{k} \ ; \ b^{\beta^\prime}_{g/f}(s)\neq 0\right\}.
	\]
	The local zeta function $Z_{\phi}(s,f/g)$ has a meromorphic continuation to the whole complex plane $\CC$, and  its poles  are contained in the set 
	\[\Bigl\{\zeta-\ell\alpha \ ;\ \ell\in\ZZ_{\geq 0}\Bigr\}_{\zeta \text{ root of } b^\alpha_{f/g}(s)}\bigcup \Bigl\{\ell\beta-\xi \ ;\ \ell\in\ZZ_{\geq 0}\Bigr\}_{\xi \text{ root of } b^\beta_{g/f}(s)}.\]
In particular, the poles of $Z_{\phi}(s,f/g)$ are rational numbers.
}
\end{theorem}
\begin{proof}
	 Let  $b^\alpha_{f/g}(s)\in\CC[s]$ be the nonzero polynomial of the first part of Theorem \ref{Thm:GeneralizedFuncEq}, note that there exists a map of $D_{R|\CC}$-modules $$\psi_\lambda : \cM_{f/g}^\alpha[s]\to \Frac(\CC\{x_1,\ldots,x_d\})$$ 
	that maps $s\mapsto \lambda$ and $\bfga\mapsto \frac{f^\lambda}{g^{\lambda+\alpha}}$ for every $\lambda\in\CC$. We also notice that after possible shrinking $U$,  Equation \eqref{Eq:IdentityConjugation} remains valid for $f,g \in R$, and then multiplication of $Z_{\phi}(s,f/g)$ by $b^\alpha_{f/g}(s)^2$ gives
	\begin{align*}
	b^\alpha_{f/g}(s)^2\,Z_{\phi}(s,f/g) & =\bigintssss_{U\setminus H}\phi(x)  b^\alpha_{f/g}(s)^2\,\left|\frac{f(x)}{g(x)}\right|^{2s}\,\mathrm{d}x\\
	& =\bigintssss_{U\setminus H}\phi(x) |g(x)|^{2\alpha}\,b^\alpha_{f/g}(s)^2 \frac{1}{|g(x)|^{2\alpha}}\left|\frac{f(x)}{g(x)}\right|^{2s}\,\mathrm{d}x\\
	&=\bigintssss_{U\setminus H}\phi(x) |g(x)|^{2\alpha}\,\left(\delta(s)\overline{\delta}(s) \frac{|f(x)|^{2(s+1)}}{|g(x)|^{2(s+\alpha)}}\right)\, \mathrm{d}x\\
	&=\bigintssss_{U\setminus H}\phi(x) |g(x)|^{2\alpha}\, \left( \delta(s)\overline{\delta}(s)  |f(x)|^{2-2\alpha}  \left|\frac{f(x)}{g(x)}\right|^{2(s+\alpha)}\right)\, \mathrm{d}x\\
	&=\bigintssss_{U\setminus H} \left(|f(x)|^{2-2\alpha} \delta^\ast(s)\overline{\delta}^\ast(s)\cdot \phi(x) |g(x)|^{2\alpha}\right)\,  \left|\frac{f(x)}{g(x)}\right|^{2(s+\alpha)}\, \mathrm{d}x\\
	&=Z_{\psi}(s+\alpha,f/g).
	\end{align*}
	{
	Here $\delta^\ast$ denotes the adjoint operator of $\delta$ and  $\psi(x) = |f(x)|^{2-2\alpha} \delta^\ast(s)\overline{\delta}^\ast(s)\cdot \phi(x) |g(x)|^{2\alpha}$ is a complex test function. By Lemma \ref{Lem:ConvergenceZetaF},  $Z_{\psi}(s+\alpha,f/g)$ converges for $-\lct_0 f-\alpha<\operatorname{Re}(s)<\lct_0 g-\alpha$, and thus
	\[
	Z_{\phi}(s,f/g)=\frac{Z_{\psi}(s+\alpha,f/g)}{b^\alpha_{f/g}(s)^2}
	\]
	converges in $-\lct_0 f-\alpha< \operatorname{Re}(s)<\lct_0 g$, outside the possible roots of $b^\alpha_{f/g}(s)$. Multiplying $Z_{\psi}(s+\alpha,f/g)$ by $b^\alpha_{f/g}(s+\alpha)^2$ we get
	\begin{equation*}
		\bigintssss_{U\setminus H} \left(|f(x)|^{2-2\alpha} \delta^\ast(s+\alpha)\overline{\delta}^\ast(s+\alpha)\cdot \psi(x) |g(x)|^{2\alpha}\right)\,  \left|\frac{f(x)}{g(x)}\right|^{2(s+2\alpha)}\, \mathrm{d}x=Z_{\rho}(s+2\alpha,f/g).
	\end{equation*}
	This expression provides a meromorphic extension of $Z_{\phi}(s,f/g)$ to  $-\lct_0 f-2\alpha< \operatorname{Re}(s)<\lct_0 g$ with possible poles in the roots of $b^\alpha_{f/g}(s)\cdot b^\alpha_{f/g}(s+\alpha)$. Iterating this process, we obtain a meromorphic extension to $\operatorname{Re}(s)<\lct_0 g$, for $Z_{\phi}(s,f/g)$.
	} 
	The possible poles of the zeta function are contained in the set
	\[\Bigl\{\zeta-\ell\alpha \ ;\ \ell\in\ZZ_{\geq 0}\Bigr\}_{\zeta \text{ root of } b^\alpha_{f/g}(s)}.\]
	{
	For the continuation `to the right' of  $Z_{\phi}(s,f/g)$ we use the first part of Theorem \ref{Thm:GeneralizedFuncEq} in the following version 
	\[\delta(-s) \frac{g^{-s+1}}{f^{-s+\beta}}=b^\beta_{g/f}(-s) \frac{g^{-s}}{f^{-s+\beta}},\]
	which implies the analogue of Equation \ref{Eq:IdentityConjugation}: $\delta(-s)\overline{\delta}(-s) \frac{|f|^{2(\beta-s)}}{|g|^{2(1-s)}}= b^\beta_{g/f}(-s)^2\frac{|f|^{2(\beta-s)}}{|g|^{2s}}$. If we multiply $Z_{\phi}(s,f/g)$ by $b^\beta_{g/f}(-s)^2$ we obtain
	\[	Z_{\phi}(s,f/g)=\frac{Z_{\phi}(\beta-s,f/g)}{b^\beta_{g/f}(-s)^2},\]
	showing that $Z_{\phi}(s,f/g)$ converges in $-\lct_0 f< \operatorname{Re}(s)<\lct_0 g-\beta$ away from the possible roots of $b^\beta_{g/f}(s)$. Further iteration of this procedure provides the desired conclusion.
	The rationality of the poles of $Z_{\phi}(s,f/g)$ follows from Theorem \ref{Thm:Kashiwara}.
	}
\end{proof}
	
\begin{remark}
	Veys and  Z\'u\~niga-Galindo study local zeta functions for meromorphic functions over local fields of characteristic zero 
	\cite{Ve-Zu}. The statement and proof of Lemma \ref{Lem:ConvergenceZetaF} are also valid in this generality, providing also a simple region of convergence for the non Archimedean  $Z_{\phi}(s,f/g)$, cf.  \cite[Theorem 3.2 (1)]{Ve-Zu}. We do not known if the theory developed in Section \ref{Sec3} is also valid in this generality, but it is certainly true over Archimedean local fields of characteristic zero, that is,  $\mathbb{R}$ or $\mathbb{C}$.  The proof of  Theorem \ref{Thm:MerContZetaF} for the real case can be given by adapting the ideas of our proof and following the lines of Igusa's work \cite[Theorem 5.3.1]{Igusa}.
\end{remark}	
	
\section{Multiplier ideals: analytic construction} \label{Sec5}

In this section we  describe the theory of analytic multiplier ideals for meromorphic functions and relate their jumping numbers to roots of the meromorphic Bernstein-Sato polynomial. 

\begin{definition}
Let $R= \CC\{ x_1,\dots , x_n\}$ be the ring of holomorphic functions and $f,g\in R$ be  nonzero elements such that $f/g$ is not constant.
We define the multiplier ideal (at the origin) of $f/g$ at $\lambda\in \RR_{\geq 0}$ by
$$
\cJ \left( \left( \frac{f}{g}\right)^\lambda\right)=\left\{ h\in R \ ;\ \int_{B_r(0)} \frac{|h|^{2} |g|^{2\lambda}}{|f|^{2\lambda}} < \infty \hbox{ for some }r>0 \right\}, 
$$ where  ${B_r(0)}$ denotes a closed ball of radius $r>0$ around the origin $0$.
\end{definition}

The following useful remark shows that we can describe the meromorphic multiplier ideal from the theory of mixed multiplier ideals $\cJ(f^{\lambda_1} g^{\lambda_2})$ associated to the pair of functions. 

\begin{remark}\label{RemMixedMultMeromorphic}
Consider $t\in\NN$ such that $t\geq \lambda$.
Then,
\begin{align*}
\cJ\left(f^{\lambda} g^{t-\lambda}\right): g^{t}& =\left\{ h\in R \ ;\    hg^{t}\in \cJ\left(f^{\lambda} g^{t-\lambda}\right) \right\} \\
&=\left\{ h\in R \ ;\ \int_{B_r(0)} \frac{|h|^2 |g|^{2t}}{|f|^{2\lambda} |g|^{2t-2\lambda}} < \infty \hbox{ for some } r >0  \right\} \\
&=\left\{ h\in R \ ;\ \int_{B_r(0)} \frac{|h|^2 |g|^{2 \lambda}}{|f|^{2\lambda}} < \infty \hbox{ for some }r>0 \right\} \\
&=\cJ \left(\left( \frac{f}{g}\right)^\lambda\right).
\end{align*}
\end{remark}

\begin{remark}
For every $\lambda\in \RR_{\geq 0}$, we have
$$
\cJ \left( \left( \frac{f}{g}\right)^\lambda\right)= \cJ \left( \left( \frac{g}{f}\right)^{-\lambda}\right).
$$
\end{remark}

\begin{definition}\label{Def:jumping_numbers}
Let $R= \CC\{ x_1,\dots , x_n\}$ be the ring of holomorphic functions and $f,g\in R$ be  nonzero elements 
We say that $\lambda\in \RR_{\geq 0}$ is a jumping number for $f/g$ if for every $\varepsilon>0$ we have either
$$
\cJ \left( \left( \frac{f}{g}\right)^\lambda\right)\neq \cJ \left( \left( \frac{f}{g}\right)^{\lambda-\varepsilon}\right)
\quad \text{or}\quad
\cJ \left( \left( \frac{f}{g}\right)^\lambda\right)\neq \cJ \left( \left( \frac{f}{g}\right)^{\lambda+\varepsilon}\right).
$$
\end{definition}
We stress that multiplier ideals of meromorphic functions are not necessarily continuous on either side. 
For instance, if $g=1$, it is known that
$$
\cJ \left( f^\lambda\right)= \cJ \left( f^{\lambda+\varepsilon}\right)
$$
for small enough $\varepsilon >0$. In contrast, if $f=1$,
$$
\cJ \left( \frac{1}{g^\lambda}\right)\neq \cJ \left( \frac{1}{g^{\lambda-\varepsilon}}\right)
$$
for small enough $\varepsilon >0$.

\begin{proposition}\label{PropSkoda}
Let $R= \CC\{ x_1,\dots , x_n\}$ be the ring of holomorphic functions, $f,g\in R$ be  nonzero elements 
 and consider $\lambda\in \RR_{\geq 0}$.
Then,
$$
\cJ \left( \left( \frac{f}{g}\right)^{\lambda+\ell}\right)
=
\left( \frac{f^\ell}{g^\ell}\left( \cJ  \left( \frac{f}{g}\right)^\lambda\right)\right)\bigcap R
$$
for every $\ell\in\NN.$
In particular,
if  $\lambda+1$ is a jumping number, then $\lambda$ is a jumping number.
\end{proposition}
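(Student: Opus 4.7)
The plan is to prove the set equality by double inclusion and then deduce the jumping number statement by applying the case $\ell=1$. For the easy inclusion $\supseteq$, given $h \in R$ of the form $h = (f^\ell/g^\ell) h'$ in $\Frac(R)$ with $h' \in \cJ((f/g)^\lambda)$, I would use the pointwise identity
\[
\frac{|h|^{2}\,|g|^{2(\lambda+\ell)}}{|f|^{2(\lambda+\ell)}} = \frac{|h'|^{2}\,|g|^{2\lambda}}{|f|^{2\lambda}}
\]
on $U \setminus H$ to transfer the local integrability from $h'$ to $h$, obtaining $h \in \cJ((f/g)^{\lambda+\ell})$.

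The nontrivial inclusion $\subseteq$ takes $h \in \cJ((f/g)^{\lambda+\ell})$ and must produce $h' \in \cJ((f/g)^\lambda)$ with $h = (f^\ell/g^\ell) h'$ in the fraction field. Since the multiplier ideal depends only on the meromorphic function $f/g$, I would first reduce to the case where $f$ and $g$ are coprime in the UFD $R$, by dividing both by $\gcd(f,g)$. Under that assumption the candidate is forced to be $h' = hg^\ell/f^\ell$, and the main task is to verify $f^\ell \mid h$ in $R$. To do this, I would fix a log resolution $\pi : X \to U$ of $f/g$ as in Section~\ref{Sec:log_resolution} and pull back the integrability condition. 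For each irreducible component $V$ of $\{f=0\}$ appearing with multiplicity $m_V$ in $f$, coprimeness forces the strict transform $\widetilde{V}$ to lie in $I_0$ with $N_{f/g,\widetilde{V}} = m_V$ and $k_{\widetilde{V}} = 0$; the integrability condition then becomes the local inequality $\ord_{\widetilde{V}}(\pi^{*}h) > (\lambda+\ell)m_V - 1$. Since $\lambda \geq 0$ and the order is an integer, this forces $\ord_V(h) \geq \ell m_V$. Running over all components of $\{f=0\}$ gives $f^\ell \mid h$, so writing $h = f^\ell h_1$ and $h' := h_1 g^\ell \in R$ yields $h = (f^\ell/g^\ell)h'$ in $\Frac(R)$, and the identity used for the easy inclusion now shows $h' \in \cJ((f/g)^\lambda)$.

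I expect the divisibility $f^\ell \mid h$ to be the main obstacle: the integrability gives numerical constraints along every component of the total transform of $f \cdot g$, and most of them are too weak on their own to produce the desired divisibility. The coprimeness reduction is exactly what isolates the components of $\{f=0\}$ as those with $k_V = 0$ and $N_{f/g,\widetilde{V}} = m_V > 0$, which is the numerical input needed to extract $\ord_V(h) \geq \ell m_V$. For the jumping number statement I would argue by contraposition: if $\lambda$ is not a jumping number, there exists $\varepsilon > 0$ with $\cJ((f/g)^{\lambda-\varepsilon}) = \cJ((f/g)^\lambda) = \cJ((f/g)^{\lambda+\varepsilon})$. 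Applying the displayed equality of the proposition with $\ell=1$ to each of these three exponents yields $\cJ((f/g)^{\lambda+1-\varepsilon}) = \cJ((f/g)^{\lambda+1}) = \cJ((f/g)^{\lambda+1+\varepsilon})$, contradicting that $\lambda+1$ is a jumping number.
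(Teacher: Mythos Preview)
Your argument is correct but proceeds along a different line than the paper's. The paper's proof is a two-line reduction to the classical theory: fixing $t\geq \lambda+\ell$, Remark~\ref{RemMixedMultMeromorphic} identifies $\cJ((f/g)^{\mu})$ with the colon ideal $\cJ(f^{\mu}g^{t-\mu}):g^{t}$, and then the classical Skoda-type identity $\cJ(f^{\lambda+\ell}g^{t-\lambda-\ell})=f^{\ell}\cJ(f^{\lambda}g^{t-\lambda-\ell})$ for mixed multiplier ideals, together with an elementary manipulation of colon ideals, yields the displayed equality. In other words, the paper exports both the divisibility step and the integrability transfer to the known holomorphic case via the mixed multiplier ideal.

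Your route is more direct and self-contained: after the coprime reduction you extract $f^{\ell}\mid h$ by reading off the order condition along the strict transforms of the components of $\{f=0\}$ in a log resolution (where $k_{\widetilde V}=0$ and $N_{f/g,\widetilde V}=m_V$), and then the pointwise identity handles both inclusions. This is essentially an explicit proof of the very Skoda-type fact that the paper invokes as a black box, so the two arguments have the same mathematical core; yours just unpacks it. Your contrapositive for the jumping-number consequence is also fine and matches the paper's ``In particular'' (which needs only $\ell=1$): by the paper's definition of jumping number, exhibiting a single $\varepsilon>0$ with equality on both sides of $\lambda+1$ suffices, and the displayed formula transports such an $\varepsilon$ from $\lambda$ to $\lambda+1$.
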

\begin{proof}
We fix $t\geq \lambda+1$.
The result follows from the equality
$$
\cJ\left(f^{\lambda+\ell} g^{t-\lambda-1}\right): g^{t}=
\left(\left(  \frac{f^\ell}{g^\ell} \cJ(f^{\lambda} g^{t-\lambda}) \right) : g^{t}\right)\bigcap R=
\left( \frac{f^\ell}{g^\ell} \left( \cJ(f^{\lambda} g^{t-\lambda}): g^{t}\right)\right)\bigcap R.
$$
given by  Remark \ref{RemMixedMultMeromorphic}.
\end{proof}

We note that  Proposition \ref{PropSkoda}  gives a weaker version of Skoda's Theorem \cite[9.6.21]{LazBook2}.
In order to prove the next result, we  use some basic notions in  the theory of mixed multiplier ideals such as constancy regions and jumping walls that can be found in  \cite[\textsection 2.3]{AAD}.

\begin{theorem} \label{Thm: ratJN}
Let $R= \CC\{ x_1,\dots , x_n\}$ be the ring of holomorphic functions and $f,g\in R$ be  nonzero elements.
The set of jumping numbers of $f/g$  is a set of rational numbers with no accumulation points.
\end{theorem}
\begin{proof}
It suffices to show that the set of jumping numbers of  $f/g$ in $[0,t]$ is a finite set of rational numbers for any $t\in\ZZ_{\geq 0}$.
By Remark \ref{RemMixedMultMeromorphic}, the jumping numbers of  $\cJ (( \frac{f}{g})^\lambda)$ is a subset of the jumping numbers of $\cJ\left(f^{\lambda} g^{t-\lambda}\right)$. 
The last set is the intersection of the jumping walls of $\cJ(f^{\lambda_1} g^{\lambda_2})$ with the line $\lambda_1+\lambda_2=t$. 
  We conclude that the  set of jumping numbers of $f/g$  is a set of rational numbers with no accumulation points,  because the jumping walls are the boundaries of the constancy regions of $\cJ(f^{\lambda_1} g^{\lambda_2})$ and, by construction, they are defined by linear equations with rational coefficients and there are only finitely many in any bounded part of the positive orthant $\RR^2_{\geq 0}$.
\end{proof}
We state the following extension in our setting of the classical result of Ein, Lazarsfeld, Smith and Varolin \cite[Theorem 2.1]{ELSV} (see also \cite{Lic87}).

\begin{theorem} \label{Thm: JN_rootsBS}
Let $R= \CC\{ x_1,\dots , x_n\}$ be the ring of holomorphic functions and $f,g\in R$ be  nonzero elements.
Let $f,g$ be two nonzero elements of $R$ and take a jumping number $\lambda$ of $f/g$ such that $\lambda \in (1-\lct_0(g), 1]$. Then, $- \lambda$ is a root of the Bernstein-Sato polynomial of $f/g$.
\end{theorem}
\begin{proof} 
Suppose that $\lambda > 1- \lct_0(g)$ is a jumping number of $f/g$ and take $ 1- \lct_0(g)<\lambda' <\lambda $.
If we fix an arbitrary $c \in [\lambda', \lambda)$ then by the definition of jumping number there exist a function $h\in \cJ \left( \left( \frac{f}{g}\right)^c\right) $ such that $h\notin  \cJ \left( \left( \frac{f}{g}\right)^{\lambda-\varepsilon}\right)$, (we may assume without lost of generality that we are in the case $\lambda-\varepsilon$ of Definition \ref{Def:jumping_numbers}). Equivalently, there exist $r$ and $r'$, with $0 < r' <r$ and such that
\begin{equation}\label{Eq:Integrals}
	\int_{B_r(0)} \frac{|h|^2|g|^{2c}}{|f|^{2c}}<\infty \quad {\rm and } \quad 
	\int_{B_{r'}(0)} \frac{|h|^2|g|^{2(\lambda-\varepsilon)}}{|f|^{2(\lambda-\varepsilon)}}=\infty.
\end{equation}
We shall show that the first integral in Equation  (\ref{Eq:Integrals}) becomes unbounded when $c$ approaches $\lambda$. { To do so, note first that the second functional equation in Theorem \ref{Thm:GeneralizedFuncEq} implies the following analogue of  Equation  (\ref{Eq:IdentityConjugation})
\begin{equation*}
\delta(s)\overline{\delta}(s) \frac{|f|^{2(s+1)}}{|g|^{2(s+1)}} = b_{f/g}(s)^2  \frac{|f|^{2s}}{|g|^{2s}} .
\end{equation*}
} In particular, for  $s=-c$  and for any positive test function $\phi$ supported on $B_{r}(0)$ we have 

\begin{equation}\label{eq:aux1}
\begin{split}
	\int_{B_r(0)} |h|^2 \phi\   b_{f/g}(-c)^2  \frac{|g|^{2c}}{|f|^{2c}} &
	= \int_{B_r(0)} |h|^2  \phi\  \delta(-c)\overline{\delta}(-c) \frac{|f|^{2(-c+1)}}{|g|^{2(-c+1)}} \\
	&= \int_{B_r(0)}  \frac{|f|^{2(-c+1)}}{|g|^{2(-c+1)}}  \delta^*(-c)\overline{\delta}^*(-c)(|h|^2 \phi), 
\end{split}
\end{equation}
where  $\delta^*(s)$ and $\overline{\delta}^*(s)$ denote the respective adjoint operators. 
Since $c>\lambda' > 1-\lct_0(g)$, the proof of Lemma \ref{Lem:ConvergenceZetaF} implies that the right-hand side of Equation (\ref{eq:aux1}) is uniformly bounded  by a positive number  depending only on $h$ and $\phi$. If we now take $\phi$ as the characteristic function of the ball $B_{r'}(0)$, then
\[	\int_{B_r(0)} |h|^2 \phi\   b_{f/g}(-c)^2  \frac{|g|^{2c}}{|f|^{2c}} \geq 	\int_{B_r'(0)} |h|^2 \phi\   b_{f/g}(-c)^2  \frac{|g|^{2c}}{|f|^{2c}}=
b_{f/g}(-c)^2 \int_{B_{r'}(0)}   \frac{|h|^2|g|^{2c}}{|f|^{2c}}.
\]
This inequality and the boundedness of the right-hand side of Equation (\ref{eq:aux1}) imply that $b_{f/g}(-c)^2 \int_{B_{r'}(0)}   \frac{|h|^2|g|^{2c}}{|f|^{2c}}$ is bounded for any $c \in [\lambda', \lambda)$, but 
Equation  (\ref{Eq:Integrals}) shows that the last integral tends to infinity as $c$ tends to $\lambda$, implying that 
$b_{f/g}(-\lambda)=0$.

\end{proof}

\section{Multiplier ideals: algebraic construction} \label{Sec6}

In this section we define the algebraic version of multiplier ideals for meromorphic functions using log resolutions. 
Throughout this section we  use the notation introduced in Section \ref{Sec:log_resolution} concerning the numerical data associated to a log resolution $\pi : X \to U$ of the meromorphic germ  $f / g$.

\begin{definition}\label{DefMultiplier}
Let $R= \CC\{ x_1,\dots , x_n\}$ be the ring of holomorphic functions and $f,g\in R$ be  nonzero elements such that $f/g$ is not constant and $\lambda\in \RR_{\geq 0}$.
We define the $0$-multiplier ideal  of $f/g$ at $\lambda$ as the stalk at the origin of
\[\cJ \left( \left(\frac{f}{g}\right)^\lambda\right) =\pi_* \mathcal{O}_X ( -[ \lambda \cdot \tilde{F} ] + K_\pi), \]
where $[\cdot]$ denotes the integer part of a real number or $\mathbb{R}$-divisor. If no confusion arises we denote the stalk at the origin in the same way and thus $\cJ \left(\left( \frac{f}{g}\right)^\lambda\right) \subseteq R$.
\end{definition}

\begin{remark}\label{RemMult} $ $
\begin{enumerate}
\item Definition \ref{DefMultiplier} is independent of the choice of the log resolution $\pi$. This follows from analogous considerations to those in the classical case when $g=1$ \cite[Example 9.1.16, Theorem 9.2.18 \& Lemma 9.2.19]{LazBook2}.
\item Algebraic and analytic definitions of multiplier ideals for meromorphic functions coincide. The proof is an easy adaptation of the one for the classical case  \cite[Theorem 9.3.42]{LazBook2}. 
\item Given $h \in R$, the condition $h \in \cJ((\frac{f}{g})^\lambda)$ is equivalent to 
\[ {\rm ord}_{E_i} \pi^* h \geq [\lambda \cdot N_{f/g, i}] - k_i \quad \hbox{ for every } \, i \in I.\]
\item Since ${\rm ord}_{E_i} \pi^* h g^t = {\rm ord}_{E_i} \pi^* h + t N_{g, i}$, and $[\lambda \cdot N_{f/g, i}]+t N_{g, i}=[\lambda \cdot N_{f, i} + (t-\lambda) \cdot N_{g,i}]$ for all $t \in \NN$ such that $t \geq \lambda$, we have that the condition $h \in \cJ((\frac{f}{g})^\lambda)$ is equivalent to the conditions ${\rm ord}_{E_i} \pi^* h g^t \geq [\lambda \cdot N_{f, i} + (t-\lambda) \cdot N_{g,i}] + k_i$ for all $t \in \NN$ with $t \geq \lambda$ and all $i\in I$. This gives a different proof of Remark \ref{RemMixedMultMeromorphic}.
\item Given $h \in R$, we have ${\rm ord}_{E_i} \pi^* h \geq 0$ for all $i \in I$. Since $\cJ((\frac{f}{g})^\lambda) \subseteq R$, it follows  that \[ \cJ\left(\left(\frac{f}{g}\right)^\lambda\right) = \pi_* \mathcal{O}_X ( -[ \lambda \cdot \tilde{F}_0 ] + K_\pi). \]
\item One can associate $\infty$-multiplier ideals $\cJ^\infty((\frac{f}{g})^\lambda)$ to the meromorphic germ $f/g$ and the parameter $\lambda$ as follows $\cJ^\infty((\frac{f}{g})^\lambda)= \cJ((\frac{g}{f})^\lambda)$.
\item There are other versions of multiplier ideals for meromorphic functions. In fact, there is a version of multiplier ideals of ineffective divisors that gives a fractional ideal \cite[Remark 9.2.2]{LazBook2}.
\end{enumerate}
\end{remark}

The following properties of multiplier ideals  $ \cJ((\frac{f}{g})^\lambda)$ are analogous to the case of multiplier ideals associated to holomorphic germs $f$, that is, when $g=1$.

\begin{lemma}
There exists a discrete strictly increasing sequence of rational numbers
\[ \lambda_1 < \lambda_2 < \cdots \]
such that 
\[\cJ\left(\left(\frac{f}{g}\right)^{\lambda_{i+1}}\right) \subsetneq \cJ\left(\left(\frac{f}{g}\right)^c\right) = \cJ\left(\left(\frac{f}{g}\right)^{\lambda_i}\right)\]
for $c \in [\lambda_i, \lambda_{i+1})$, and all $i$. These rational numbers are called the $0$-jumping numbers of $f/g$. 
\end{lemma}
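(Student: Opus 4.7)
The plan is to extract the sequence $\{\lambda_i\}$ directly from the combinatorial data of the log resolution $\pi : X \to U$. The algebraic definition gives
\[ \cJ\left(\left(\frac{f}{g}\right)^\lambda\right) = \pi_* \mathcal{O}_X\left( -\sum_{i \in I_0} [\lambda N_{f/g,i}] E_i + \sum_{i \in I} k_i E_i \right),\]
so the ideal depends on $\lambda \geq 0$ only through the integer tuple $([\lambda N_{f/g,i}])_{i \in I_0}$. Since each $N_{f/g,i}$ with $i \in I_0$ is a positive integer, each coordinate $\lambda \mapsto [\lambda N_{f/g,i}]$ is a right-continuous nondecreasing step function with discontinuities exactly at the rationals $k/N_{f/g,i}$, $k \in \ZZ_{\geq 1}$. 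Consequently $\lambda \mapsto \cJ((f/g)^\lambda)$ is a right-continuous, monotone \emph{decreasing} function of $\lambda$ (monotonicity follows from the inequality $-[\mu \tilde F_0] + K_\pi \leq -[\lambda \tilde F_0] + K_\pi$ for $\lambda \leq \mu$), constant on each connected component of the complement of the candidate set
\[ S = \bigcup_{i \in I_0} \left\{ \frac{k}{N_{f/g,i}} : k \in \ZZ_{\geq 1} \right\} \subseteq \QQ_{>0}. \]

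Since $I_0$ is finite, $S$ is a discrete subset of $\QQ_{>0}$ with no accumulation point in $\RR_{\geq 0}$. I would then enumerate in increasing order those elements $\lambda \in S$ at which the multiplier ideal strictly drops, i.e.\ those $\lambda$ for which $\cJ((f/g)^\lambda) \subsetneq \cJ((f/g)^{\lambda-\epsilon})$ for every sufficiently small $\epsilon > 0$. Discreteness of $S$ ensures this enumeration produces a strictly increasing sequence $\lambda_1 < \lambda_2 < \cdots$ of positive rationals with no accumulation point. By construction, no effective drop occurs in the open interval $(\lambda_i, \lambda_{i+1})$, so right-continuity and monotonicity force $\cJ((f/g)^c) = \cJ((f/g)^{\lambda_i})$ for all $c \in [\lambda_i, \lambda_{i+1})$, while the definition of $\lambda_{i+1}$ yields the strict inclusion $\cJ((f/g)^{\lambda_{i+1}}) \subsetneq \cJ((f/g)^{\lambda_i})$.

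I do not expect any substantial obstacle: the argument reduces to an elementary analysis of the floor function combined with the divisorial formula for $\cJ$. The only points requiring care are (i) verifying that the set $S$ is genuinely discrete (immediate from the finiteness of $I_0$), and (ii) distinguishing the \emph{candidate} jumping numbers in $S$ from the \emph{actual} jumping numbers, which are obtained by selecting only those $\lambda \in S$ at which the ideal strictly drops. One could also note that since the multiplier ideal is nonincreasing and eventually zero (provided $I_0 \neq \emptyset$), the enumeration is infinite; if instead $I_0 = \emptyset$, then $\cJ((f/g)^\lambda)$ is constant and the sequence is empty, which is a degenerate but consistent instance of the statement.
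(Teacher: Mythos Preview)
The paper does not give an explicit proof of this lemma; it is stated immediately after the sentence ``The following properties of multiplier ideals $\cJ((\frac{f}{g})^\lambda)$ are analogous to the case of multiplier ideals associated to holomorphic germs $f$, that is, when $g=1$,'' and is left to the reader as a direct adaptation of the classical argument. Your proposal is exactly that standard argument, correctly carried out: you use Remark~(4) to replace $\tilde F$ by $\tilde F_0$, observe that the ideal depends on $\lambda$ only through the finite tuple of floors $([\lambda N_{f/g,i}])_{i\in I_0}$, and deduce right-continuity, monotonicity, and discreteness of the candidate set $S$ from the finiteness of $I_0$. This is correct and is precisely what the paper intends.

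One very minor wording issue: ``eventually zero'' is not quite accurate, since $\cJ((f/g)^\lambda)$ is never the zero ideal; what you need (and what follows from Lemma~\ref{Lem: JN_integers}, $\cJ((f/g)^n)=(f^n)$) is that the ideals do not stabilize when $f$ is a nonunit, hence the sequence of actual jumps is infinite. Your handling of the degenerate case $I_0=\emptyset$ is appropriate.
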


\begin{remark} $ $
\begin{itemize}
\item The candidate  $0$-jumping numbers of $f/g$ have the form
$\frac{k_i + 1+ \ell}{N_{f/g,i}}$ with  $\ell \in \mathbb{Z}_{\geq 0}$.
\item Let $\lambda \in \mathbb{Q}$ be a candidate jumping number of $f$ associated to the divisor $E_i$. Then $\lambda \cdot \frac{N_{f,i}}{N_{f/g,i}}$ is a candidate  $0$-jumping number of $f/g$. Notice that $\frac{N_{f,i}}{N_{f/g,i}} \geq 1$.
\end{itemize}
\end{remark}

\begin{lemma} \label{Lem: JN_integers} For any $\lambda \in \RR_{> 0}$ we have  $\cJ(f^\lambda) \subseteq \cJ((\frac{f}{g})^\lambda)$. In addition,   we have $\cJ((\frac{f}{g})^n)=(f^n)$ for any  $n \in \mathbb{Z}_{>0}$.
\end{lemma}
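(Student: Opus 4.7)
The first containment follows from a direct comparison of divisors on a log resolution. Fix a common log resolution $\pi\colon X\to U$ of $fg$ that separates $f$ and $g$. I would compare the $\RR$-divisors $\lambda\tilde{f}=\lambda\pi^{*}f=\sum_{i\in I}\lambda N_{f,i}E_{i}$ and $\lambda\tilde{F}_{0}=\sum_{i\in I_{0}}\lambda N_{f/g,i}E_{i}$: for $i\in I_{0}$ one has $N_{f/g,i}=N_{f,i}-N_{g,i}\leq N_{f,i}$ since $N_{g,i}\geq 0$, and for $i\notin I_{0}$ the $\tilde{F}_{0}$-coefficient is $0\leq N_{f,i}$. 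Hence $\lambda\tilde{F}_{0}\leq\lambda\tilde{f}$ componentwise, and the same inequality survives after taking integer parts, so $\mathcal{O}_{X}(-[\lambda\tilde{f}]+K_{\pi})\subseteq\mathcal{O}_{X}(-[\lambda\tilde{F}_{0}]+K_{\pi})$. Applying $\pi_{*}$ yields $\cJ(f^{\lambda})\subseteq\cJ((f/g)^{\lambda})$.

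For the identity $\cJ((f/g)^{n})=(f^{n})$ I would work with a coprime representative $(f,g)$ of the meromorphic germ and first invoke Remark \ref{RemMixedMultMeromorphic} with $t=n$ (so the exponent on $g$ in the mixed multiplier ideal is zero) to obtain $\cJ((f/g)^{n})=\cJ(f^{n}):g^{n}$. Next I would identify $\cJ(f^{n})=(f^{n})$ from the log resolution: the inclusion $(f^{n})\subseteq\cJ(f^{n})$ is immediate because $\ord_{E_{i}}(\pi^{*}f^{n})=nN_{f,i}\geq nN_{f,i}-k_{i}$, and conversely, if $h\in\cJ(f^{n})$, then for each irreducible component $D$ of $\{f=0\}$ its strict transform $\tilde{D}$ has $k_{\tilde{D}}=0$, so the multiplier ideal condition forces $\ord_{D}(h)=\ord_{\tilde{D}}(\pi^{*}h)\geq nN_{f,\tilde{D}}=n\cdot\ord_{D}(f)$, which in the unique factorization domain $R$ means $f^{n}\mid h$. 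Finally, the coprimality of $f$ and $g$ yields $(f^{n}):g^{n}=(f^{n})$, concluding the proof.

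The main obstacle lies in the sensitivity of the identity to the chosen representatives of the germ: for $(f,g)=(x^{2},x)$ one computes $\cJ((f/g)^{n})=(x^{n})\neq(x^{2n})=(f^{n})$, so one genuinely has to pass to a coprime pair before asserting the identity. Once this reduction is made, the remaining arguments are routine manipulations with the log resolution data and with colon ideals in $R$.
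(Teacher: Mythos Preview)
Your argument for the first inclusion is exactly the paper's, spelled out in more detail: the paper simply records the inequality $N_{f,i}\geq N_{f/g,i}$ (phrased as $N_{f,i}/N_{f/g,i}\geq 1$ for $i\in I_0$), which is the content of your divisor comparison $\lambda\tilde{F}_0\leq\lambda\tilde{f}$.

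For the identity $\cJ((f/g)^n)=(f^n)$ you take a mildly different route. The paper argues directly on the meromorphic side: it quotes $(f^n)=\cJ(f^n)$ as known and combines it with the first inclusion to get $(f^n)\subseteq\cJ((f/g)^n)$; for the reverse it observes that on each strict transform component $E_i$ of $f$ one has $N_{g,i}=0$ and $k_i=0$, so membership in $\cJ((f/g)^n)$ already forces $\ord_{E_i}\pi^*h\geq nN_{f,i}$, whence $f^n\mid h$. You instead invoke Remark~\ref{RemMixedMultMeromorphic} with $t=n$ to rewrite $\cJ((f/g)^n)=\cJ(f^n):g^n$, then run the identical strict transform computation to identify $\cJ(f^n)=(f^n)$, and finish with the colon computation $(f^n):g^n=(f^n)$. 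The core valuative observation is the same in both; your version has the small advantage of isolating the classical fact $\cJ(f^n)=(f^n)$ and making the role of coprimality explicit through the colon ideal, while the paper's version is slightly more direct. Your closing remark that the identity fails for non-coprime representatives such as $(x^2,x)$ is well taken: the paper's step $N_{f,i}/N_{f/g,i}=1$ on strict transform components of $f$ is exactly the assertion $N_{g,i}=0$ there, i.e.\ the coprimality hypothesis, used without comment.
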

\begin{proof} 
The inclusion $\cJ(f^\lambda) \subseteq \cJ((\frac{f}{g})^\lambda)$ follows from the fact that $N_{f,i}/N_{f/g,i} \geq 1$. 
Therefore, we have that $(f^n) = \cJ(f^n) \subseteq \cJ((\frac{f}{g})^n)$. Moreover, for any $E_i$ in the strict transform of 
$f$ we have that $N_{f,i}/N_{f/g,i} =1$, and $k_i=0$. Hence, if $h \in \cJ((\frac{f}{g})^n)$, we have that 
$\ord_{E_i} \pi^* h \geq n \cdot N_{f,i}$. It follows that $f$ divides $h$ and hence $\cJ((\frac{f}{g})^n) \subseteq (f^n)$.
\end{proof}

\begin{remark} In general $\cJ((\frac{f}{g})^{\lambda+n}) \ne (f^n) \cdot \cJ((\frac{f}{g})^\lambda)$, as  Example \ref{ExMult} shows. As a consequence, the periodicity of jumping numbers fails. Alternatively, Skoda's Theorem for multiplier ideals of meromorphic germs  is weaker than the classic one. 
\end{remark}

\begin{example}\label{ExMult}
Let us consider $f=y^3+x^5$, $g_1=x$, $g_2=y$, and $f/g_i$ with $i=1,2$. The minimal resolution of $f g_i$ is obtained after 4 point blow-ups. Denote $E_i$ with $i=1,2,3,4$ the exceptional divisors, $E_5$ (resp. $E_6$) the strict transform of $f$ (resp. of $g_i$). Notice that $E_5$ intersects $E_4$, and $E_6$ intersects $E_1$ if $i=1$ or $E_2$ if $i=2$. Two additional point blow-ups, with exceptional divisors $E_7$ and $E_8$ are needed to construct the resolution of $f/g_i$. The following figures and tables give the corresponding dual resolution graphs and resolution data 
\begin{figure}[ht]
\begin{center}
\begin{tikzpicture}[scale=.9, vertice/.style={draw,circle,fill,minimum size=0.15cm,inner sep=0}]
\coordinate () at (0,0);
\coordinate (A) at (-9.8,2);
\coordinate (B) at (-8.4,2);
\coordinate (C) at (-7,2);
\coordinate (D) at (-5.6,2);
\coordinate (E) at (-4.2,2);
\coordinate (F) at (-2.8,2);
\coordinate (G) at (-9.8,3);
\coordinate (H) at (-4.2,3);

\node[vertice] at (A) {};
\node[below] at (A) {{\tiny $E_8$}};
\node[vertice] at (B) {};
\node[below] at (B) {{\tiny $E_7$}};
\node[vertice] at (C) {};
\node[below] at (C) {{\tiny $E_1$}};
\node[vertice] at (D) {};
\node[below] at (D) {{\tiny $E_3$}};
\node[vertice] at (E) {};
\node[below] at (E) {{\tiny $E_4$}};
\node[vertice] at (F) {};
\node[below] at (F) {{\tiny $E_2$}};
\node[above] at (G) {{\tiny $E_6$}};
\node[above] at (H) {{\tiny $E_5$}};

\draw (A)--(B)--(C)--(D)--(E)--(F);
\draw[-{[scale=1.5]>>}] (A)--(G) ;
\draw[-{[scale=1.5]>}] (E)--(H) ;

\coordinate (W) at (-1.2,2);
\coordinate (Z) at (0.2,2);
\coordinate (A1) at (1.6,2);
\coordinate (B1) at (3,2);
\coordinate (C1) at (4.4,2);
\coordinate (D1) at (5.8,2);
\coordinate (E1) at (7.2,2);

\coordinate (G1) at (1.6,3);
\coordinate (H1) at (7.2,3);

\node[vertice] at (W) {};
\node[below] at (W) {{\tiny $E_1$}};
\node[vertice] at (Z) {};
\node[below] at (Z) {{\tiny $E_3$}};
\node[vertice] at (A1) {};
\node[below] at (A1) {{\tiny $E_4$}};
\node[vertice] at (B1) {};
\node[below] at (B1) {{\tiny $E_2$}};
\node[vertice] at (C1) {};
\node[below] at (C1) {{\tiny $E_7$}};
\node[vertice] at (D1) {};
\node[below] at (D1) {{\tiny $E_8$}};
\node[vertice] at (E1) {};
\node[below] at (E1) {{\tiny $E_9$}};
\node[above] at (G1) {{\tiny $E_5$}};
\node[above] at (H1) {{\tiny $E_6$}};

\draw (W)--(Z)--(A1)--(B1)--(C1)--(D1)--(E1);
\draw[-{[scale=1.5]>>}] (A1)--(G1) ;
\draw[-{[scale=1.5]>}] (E1)--(H1) ;
\end{tikzpicture}
\end{center}
\end{figure}

\scalebox{0.9}{
$\begin{array}{c|cccccccc}
& E_1& E_2& E_3& E_4& E_5& E_6& E_7& E_8\\
\hline
N_{f, i}& 3& 5& 9& 15& 1& 0& 3& 3\\
N_{g_1,i}& 1& 1& 2& 3& 0& 1& 2& 3\\
k_i&1& 2& 4& 7& 0& 0& 2& 3\\
\end{array}
\qquad 
\begin{array}{c|ccccccccc}
& E_1& E_2& E_3& E_4& E_5& E_6& E_7& E_8& E_9\\
\hline
N_{f, i}& 3& 5& 9& 15& 1& 0& 5& 5& 5\\
N_{g_2,i}& 1& 2& 3& 5& 0& 1& 3& 4& 5\\
k_i&1& 2& 4& 7& 0& 0& 3& 4& 5\\
\end{array}$
}
\vspace{12pt}

The set of jumping numbers of $f/g_1$ is $\left\{\frac{8}{12}, \frac{11}{12}, 1, \frac{23}{12}\right\} \cup \ZZ_{\geq 2}$, the set of jumping numbers of $f/g_2$ is  $\left\{ \frac{8}{10}\right\} \cup \ZZ_{> 0}$  
and the multiplier ideals of $f/g_1$ and $f/g_2$ are

\scalebox{0.9}{
$\cJ\left(\left(\frac{f}{g_1}\right)^\lambda\right) = \begin{cases}
1 &   0 < \lambda < 8/12,\\
(x,y)  &  8/12  \leq \lambda < 11/12,\\
(x^2,y)  & 11/12  \leq \lambda < 1,\\
f &  1 \leq \lambda < 23/12,\\
(x, y)f &  23/12  \leq \lambda < 2\\
f^n &   n  \leq \lambda < n+1,\\
& \quad \hbox{ and } n \in \ZZ_{\geq 2}.
\end{cases}
 \quad \hbox{ and } \quad \cJ\left(\left(\frac{f}{g_2}\right)^\lambda\right) = \begin{cases}
1 &   0 < \lambda < 8/10,\\
(x,y)  &  8/10  \leq \lambda < 1,\\
f^n &   n  \leq \lambda < n+1,\\
& \quad \hbox{ and } n \in \ZZ_{\geq 1}.
\end{cases}$
}

For comparison's sake, recall that the set of jumping numbers of $f$ is $\left\{ \frac{8}{15}, \frac{11}{15}, \frac{13}{15}, \frac{14}{15} \right\} + \ZZ_{\geq 0}$, 
and the multiplier ideals of $f$ are

\[
\cJ(f^\lambda) = \begin{cases}
f^n &   n < \lambda < 8/15 + n, \hbox{ and } \lambda \ne 0,\\
(x,y) f^n  &  8/15 + n \leq \lambda < 11/15+n,\\
(x^2,y) f^n &  11/15 + n \leq \lambda < 13/15+n,\\
(x,y)^2 f^n &  13/15 + n \leq \lambda < 14/15+n,\\
(x^3, xy, y^2)f^n &   14/15 + n \leq \lambda < 1+n.
\end{cases}
\]

\end{example}

\section*{Acknowledgments}
We thank Nero Budur for pointing out the phenomenon described in Remark \ref{RmkNero},  Kevin Tucker for Remark \ref{RemMult}(6), and Ben Lichtin for pointing out a missing reference.  We also thank an anonymous referee for comments in a previous version of the paper that had a mistake.

\bibliographystyle{alpha}

\bibliography{References}

\end{document}